\newtheorem{thm}{Theorem}[section] %[subsection]
\newtheorem{lm}[thm]{Lemma}
\newtheorem{clm}[thm]{Claim}
\newtheorem*{clm*}{Claim}
\theoremstyle{definition}
\newtheorem{df}[thm]{Definition}
\numberwithin{equation}{section}
\newcommand{\sprf}{\noindent{\it Proof.}} %used in spf environment
\newcommand{\sqed}{\hfill\rule{1.3mm}{3mm}\medskip}
\newcommand{\cproof}{\noindent{\it Proof of Claim.}\ } %used in spf environment
\newcommand{\cqed}{\hfill\rule{1.3mm}{3mm}}
\newcommand{\set}[2]{\{#1\mid\nobreak\text{#2}\}} % { x | ... }
\DeclareMathOperator{\id}{\textsl{id}}     % The identity map
\newcommand{\wec}[1]{{\mathbf{#1}}}  % notation for vectors in algebras
\newcommand{\m}[1]{{\mathbf{\uppercase{#1}}}}
\DeclareMathOperator{\Ho}{\mathsf{H}}
\DeclareMathOperator{\Su}{\mathsf{S}}
\DeclareMathOperator{\Pd}{\mathsf{P}}
\DeclareMathOperator{\Con}{Con}
\DeclareMathOperator{\Conb}{\mathbf{Con}} % Con(A) as a lattice
\DeclareMathOperator{\cg}{Cg}
\newcommand{\bd}{\begin{description}}
\newcommand{\ed}{\end{description}}
\begin{document}

\title{Varieties whose finitely generated members are free}

\author{Keith A. Kearnes}
\address[Keith Kearnes]{Department of Mathematics\\
University of Colorado\\
Boulder, CO 80309-0395\\
USA}
\email{keith.kearnes@colorado.edu}
\author{Emil W. Kiss}
\address[Emil W. Kiss]{
Lor\'{a}nd E{\"o}tv{\"o}s University\\
Department of Algebra and Number Theory\\
H--1117 Budapest, P\'{a}zm\'{a}ny P\'{e}ter s\'{e}t\'{a}ny 1/c.\\
Hungary}
\email{ewkiss@cs.elte.hu}
\author{\'Agnes Szendrei}
\address[\'Agnes Szendrei]{Department of Mathematics\\
University of Colorado\\
Boulder, CO 80309-0395\\
USA}
\email{agnes.szendrei@colorado.edu}
%\thanks{This material is based upon work supported by
%the Hungarian National Foundation for Scientific Research (OTKA)
%grant no.\ K77409, K83219, and K104251.
%}
\thanks{%
This material is based upon work supported by
the National Science Foundation grant no.\ DMS 1500254 and
the Hungarian National Foundation for Scientific Research (OTKA)
grant no.\ K104251 and K115518.
}

\subjclass{08B20 (08A05, 03C35)}
\keywords{}

\begin{abstract}
  We prove that a
  variety of algebras whose finitely
  generated members are free
  must be definitionally equivalent to
  the variety of sets, the variety of pointed sets, a variety of vector spaces
  over a division ring, or a variety of affine vector spaces over a
  division ring.
\end{abstract}

\maketitle

\section{Introduction}\label{intro_sec}
In this paper we address a MathOverflow question, \cite{campion},
which asks for a description of the varieties
where every algebra is free, as well as a description of the varieties
satisfying the weaker requirement that
every finitely generated algebra is free.

Steven Givant classified the varieties where every algebra is free
in \cite{givant}.
He proved that they
are precisely those definitionally equivalent to
\begin{itemize}
\item the variety of sets,
\item the variety of pointed sets,
\item a variety of vector spaces over a division ring, or
\item a variety of affine spaces over a division ring.
\end{itemize}
In this paper we use different techniques
to classify the varieties where every
finitely generated algebra is free.
Our result is that if the finitely generated members of
a variety
$\mathcal V$ are free, then $\mathcal V$
must also be one of these types of varieties
(sets, pointed sets, vector spaces or affine spaces).
Hence, if the finitely generated algebras
in $\mathcal V$ are free, then all algebras in $\mathcal V$ are free.
This gives a new proof of Givant's Theorem under weaker
hypotheses.

In the last section of the paper we discuss
some variations on the main question.
First we consider
a ``large rank'' variation:
Which varieties have the property
that their finitely generated algebras
of sufficiently large rank
are free?
That is, 
for which varieties $\mathcal V$ is there a finite
number $k$ such that every finitely generated
algebra in $\mathcal V$ requiring more than $k$
generators is free?
We prove the theorem that a locally finite variety with this property
must even have the property that all of its
nonsingleton algebras are free,
and it is essentially
one of the four types of varieties discussed above.
Without the assumption of local finiteness this theorem fails.

Next we examine a ``small rank'' variation of the main question:
Is there some $n$ such that,
if all $(\leq n)$-generated algebras in a variety 
are free, then all finitely generated algebras
in the variety
are free?
The answer to this is negative.
We show that 
for each positive integer $n$ there 
exist varieties
in which the algebras generated by at most $n$ elements are free,
but the  $(n+1)$-generated algebras are not all free.

\section{Abelian and affine algebras}
Please refer to \cite{freese-mckenzie, hobby-mckenzie, kearnes-kiss}
for elaboration of the introductory remarks of this section.

An algebra $\m a$
is \emph{abelian} if it satisfies the \emph{term condition},
which is the assertion that if $t(\wec{x},\wec{y})$ is a term
in the language, $\wec{a}, \wec{b}, \wec{u}$ and $\wec{v}$
are tuples of elements of $A$, and
\[
t^{\m a}(\underline{\wec{a}},\wec{u}) = t^{\m a}(\underline{\wec{a}},\wec{v}), 
\]
then 
\[
t^{\m a}(\underline{\wec{b}},\wec{u}) = t^{\m a}(\underline{\wec{b}},\wec{v}). 
\]
This property
is the same as the property that the diagonal 
$\set{(a,a)}{$a\in A$}$
of $\m a\times \m a$
is the class of a congruence.

An algebra $\m b$ is \emph{affine} if it is polynomially equivalent
to a module. This means that there is a ring $R$ and a left
$R$-module structure 
${}_R B$
on the universe $B$ of $\m b$ such that
the polynomial operations of $\m b$ coincide with the
$R$-module polynomial operations 
of ${}_R B$.
(A polynomial
operation of an algebra $\m b$ is an operation
$p(\wec{x})$
obtained from a term operation by substituting constants for
some of the variables, i.e.\  
$p(\wec{x}) = t^{\m b}(\wec{x},\wec{b})$
for some term $t(\wec{x},\wec{y})$ in the language and some
tuple $\wec{b}$ of elements of $B$.)

A variety is abelian or affine if its members are.
It is a fact that affine algebras and varieties are abelian,
but the converse is false, e.g.\ unary varieties are abelian
but not affine.

Abelian varieties that are not affine are poorly understood
at present. If $\mathcal V$ is a \underline{locally finite} variety that 
is abelian but not affine, then it can be proved that $\mathcal V$
contains a very ``bad'' or ``structureless'' algebra, i.e.\ 
one that is definitionally equivalent
to a matrix power of a 
two-element set or pointed set.
The procedure for proving this is to first exploit
the nonaffineness assumption to construct
a finite ``strongly abelian'' algebra $\m s\in \mathcal V$,
and then to examine a minimal subvariety of the variety
$\Ho\Su\Pd(\m s)$
generated by $\m s$.
The structure of such minimal subvarieties are determined
by the classification theorem for minimal abelian varieties,
which can be found in 
\cite{kkv1} and \cite{szendrei}. Namely, a minimal subvariety
of a variety generated by a finite strongly abelian algebra
is definitionally
equivalent to a matrix power of the variety of sets
or the variety of pointed sets.

These arguments fail at the very first step
for varieties that are not locally finite: 
it is not known 
if the construction discussed in the preceding paragraph
yields an algebra $\m s$ that is strongly abelian. 
In this section we examine
the construction of $\m s$
and identify some ``strongly abelian--like''
properties of $\m s$.

First, a congruence $\theta\in\Con(\m a)$ is \emph{strongly abelian}
if it satisfies the strong term condition,
which is the assertion that if $t(\wec{x},\wec{y})$ is a term
in the language, 
$\wec{a}, \wec{b}, \wec{c}, \wec{u}, \wec{v}$
are tuples of elements of $A$ with 
$\wec{a}$, $\wec{b}$, $\wec{c}$\ 
$\theta$-related coordinatewise and
$\wec{u}, \wec{v}$\ 
$\theta$-related coordinatewise, and
\[
t^{\m a}(\underline{\wec{a}},\wec{u}) = t^{\m a}(\underline{\wec{b}},\wec{v}), 
\]
then
\[
t^{\m a}(\underline{\wec{c}},\wec{u}) = t^{\m a}(\underline{\wec{c}},\wec{v}). 
\]

Now suppose that $\m a$ is abelian
and $\theta\in\Con(\m a)$ is strongly abelian.
The construction we are concerned with is the following one:
Let $\m a(\theta)$ be the subalgebra of $\m a\times \m a$
supported by (the graph of) $\theta$,
that is, $\set{(a,b)\in A}{$a\equiv_\theta b $}$.
Let $\Delta$ be the congruence on $\m a(\theta)$ 
generated by $D\times D$ where $D = \{(a,a)\;|\;a\in A\}$
is the diagonal. 
$D$ is a $\Delta$-class, because $\m a$ is abelian.
Let $\m s = \m s_{\m a,\theta} := \m a(\theta)/\Delta$.
Let $0 = D/\Delta\in S$.

If $\m a$ is a finite member of an abelian variety,
then it is possible to
prove that the resulting algebra $\m s$ is
a strongly abelian
member of the variety (meaning that
all of its congruences
are strongly
abelian). Without finiteness we do not know how to prove this.
However, we can prove the following.

\begin{lm}\label{nonaffine}
  Let $\mathcal V$ be an abelian variety, and suppose that
  $\theta$ is a nontrivial strongly abelian congruence on some
  $\m a\in\mathcal V$. Let $\m s = \m s_{\m a,\theta}$
  and let $0 = D/\Delta\in S$. The following are true:
  \begin{enumerate}
    \item $\m s$ has more than one element.
  \item $\{0\}$ is a 1-element subuniverse of\/ $\m s$.
  \item $\m s$ has ``Property~P'': for every $n$-ary polynomial
    $p(\wec{x})$ of\/ $\m s$ and every tuple $\wec{s}\in S^n$
    \[
p(\wec{s})=0\quad\textrm{implies}\quad p(\wec{0})=0,
\]
where $\wec{0} = (0,0,\ldots,0)$.
  \item Whenever $t(x_1,\ldots,x_n)$ is
     a $\mathcal V$-term, and
    \[
    \mathcal V\models t(\wec{x})=
    t(\wec{y})
    \]
    where $\wec{x}$ and $\wec{y}$ are tuples of not necessarily
    distinct variables which differ in the $i$th position,
    then the term operation $t^{\m s}(x_1,\ldots,x_n)$ is
    independent of its $i$th variable.
  \item $\m s$ has a congruence $\sigma$ such that the
    algebra $\m s/\sigma$ satisfies (1)--(4) of this lemma,
    and $\m s/\sigma$ also
    has a compatible partial order $\leq$ such that
$0\leq s$ for every $s\in S/\sigma$.
    \end{enumerate}
  \end{lm}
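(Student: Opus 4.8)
The plan is to prove (1) and (2) by inspection, to extract (3) from the strong term condition for $\theta$, to deduce (4) from the abelianness of $\mathcal V$, and to build the quotient in (5) by hand, using (3) to keep the quotient from collapsing. For (1), since $\theta$ is nontrivial choose $a\neq b$ with $a\equiv_\theta b$; then $(a,b)\in\m a(\theta)$ but $(a,b)\notin D$, and because $D$ is a single $\Delta$-class we get $(a,b)/\Delta\neq D/\Delta=0$, so $|S|>1$. For (2), the diagonal $D$ is a subuniverse of $\m a(\theta)$ (it is the image of the diagonal embedding of $\m a$, and lies in $\m a(\theta)$ by reflexivity of $\theta$), so its image $\{0\}=D/\Delta$ is a one-element subuniverse of $\m s$.

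For (3), I would write a polynomial of $\m s$ as the image of a polynomial $\hat p(\wec x)=t^{\m a(\theta)}(\wec x,\wec e)$ of $\m a(\theta)$, whose parameters are pairs $\wec e=((c_1,d_1),\dots,(c_m,d_m))$ with $c_k\equiv_\theta d_k$, and pick representatives $s_l=(a_l,b_l)/\Delta$ (so $a_l\equiv_\theta b_l$) of the entries of $\wec s\in S^n$. Since $D$ is a $\Delta$-class, the value of a polynomial at a tuple equals $0$ exactly when its lift lands in $D$, i.e.\ has equal coordinates; thus $p(\wec s)=0$ says $t^{\m a}(\wec a,\wec c)=t^{\m a}(\wec b,\wec d)$. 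Applying the strong term condition for $\theta$ with the first $n$ coordinates as the changeable block (tuples $\wec a$, $\wec b$, and $\wec a$ again, which are coordinatewise $\theta$-related) and the parameters as the $\theta$-related block $(\wec c,\wec d)$ yields $t^{\m a}(\wec a,\wec c)=t^{\m a}(\wec a,\wec d)$; computing $p(\wec 0)$ with the diagonal representatives $(a_l,a_l)$ of $0$, this is exactly $p(\wec 0)=0$.

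For (4), let $u$ and $u'$ be the distinct variables occupying position $i$ in $\wec x$ and $\wec y$, all other positions carrying a common variable in $\wec x$ and $\wec y$. Fix $s_i,s_i'\in S$ and take the assignment sending $u\mapsto s_i$, $u'\mapsto s_i'$ and every remaining variable to some fixed value; this is consistent precisely because the positions $j\neq i$ agree in $\wec x$ and $\wec y$. Writing $\wec{p}^*$ for the resulting tuple of values at the positions $j\neq i$, the identity $t(\wec x)=t(\wec y)$, which holds in $\m s$ (a quotient of a subalgebra of $\m a\times\m a$, hence a member of $\mathcal V$, hence abelian), becomes $t^{\m s}(\wec{p}^*,s_i)=t^{\m s}(\wec{p}^*,s_i')$ with position $i$ displayed last. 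Because $\m s$ is abelian, the term condition with position $i$ as the parameter block promotes this single instance to $t^{\m s}(\wec q,s_i)=t^{\m s}(\wec q,s_i')$ for \emph{every} $\wec q$; that is, $t^{\m s}$ does not depend on its $i$th variable.

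For (5), I would let $R$ be the subalgebra of $\m s\times\m s$ generated by the diagonal together with all pairs $(0,s)$, and let $\preceq$ be its transitive closure, which stays compatible since the operations are monotone; thus $\preceq$ is a compatible preorder with $0\preceq s$ for all $s$. Put $\sigma=\preceq\cap\succeq$: it is a congruence, and $\preceq$ descends to a compatible partial order on $\m s/\sigma$ with $0/\sigma$ least. The crux is non-collapse, namely that $s\preceq 0$ forces $s=0$: a single step $z\,R\,0$ has the shape $z=f^{\m s}(\wec g)$, $0=f^{\m s}(\wec h)$ with $\wec g$ obtained from $\wec h$ by replacing some entries with $0$, and zeroing one coordinate at a time while applying Property~P (part (3)) to the relevant unary polynomial shows each such $z$ equals $0$; walking back along a $\preceq$-chain then gives $s=0$. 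Hence $0/\sigma=\{0\}$, so (1) and (2) persist, (3) transfers because the preimage of $0/\sigma$ is exactly $\{0\}$, and (4) is automatic since $\m s/\sigma$ is again an abelian member of $\mathcal V$. I expect the construction and verification in (5) to be the main obstacle: everything rests on the implication $s\preceq 0\Rightarrow s=0$, and it is precisely here that Property~P is indispensable.
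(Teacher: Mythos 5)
Your treatments of (1), (2), (3), and the order construction in (5) track the paper's own proof almost verbatim: (1)--(2) by inspection (your (1) leans on the setup fact that $D$ is a full $\Delta$-class, which the paper derives from abelianness of $\m a$, but that fact is stated in the construction, so this is fine); (3) by lifting to $\m a(\theta)$ and applying the strong term condition with the changeable block $\wec{a},\wec{b},\wec{a}$; and in (5) the paper likewise takes the reflexive compatible relation generated by $\{0\}\times S$, its transitive closure $R^*$, and $\sigma=R^*\cap (R^*)^{\cup}$, with Property~P supplying exactly your non-collapse implication $s\preceq 0\Rightarrow s=0$.

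However, your proof of (4) has a genuine gap: you have read ``tuples which differ in the $i$th position'' as ``tuples which differ \emph{only} in the $i$th position.'' The intended statement --- as the paper's proof makes explicit by allowing each pair $(w_j,z_j)$ to range over $\{(x,y),(y,x),(x,x),(y,y)\}$ after specialization --- covers identities $t(\wec{x})=t(\wec{y})$ whose two sides may also disagree at positions other than $i$, such as $t(x,y)=t(y,x)$; this generality is precisely what is needed later, e.g.\ for the remark that an algebra with property (4) blocks all nontrivial idempotent Maltsev conditions, since Taylor identities have this mixed shape. Your argument collapses there: when other positions differ, the instance you obtain is \emph{not} of the form $t^{\m s}(\wec{p}^*,s_i)=t^{\m s}(\wec{p}^*,s_i')$ with a common block $\wec{p}^*$ on both sides, so the abelian term condition cannot be invoked. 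Moreover, no argument from abelianness alone can close this gap: in the variety of abelian groups the term $t(x,y)=x+y$ satisfies $t(x,y)=t(y,x)$ yet depends on both variables, so your version of (4) holds in \emph{every} abelian member of $\mathcal V$, whereas the true (4) is special to $\m s$. (That your (4) needed nothing about $\m s$ was a warning sign.) The paper bridges exactly this gap with its Claim that $t^{\m s}(s,0,\ldots,0)=0$ for all $s\in S$, proved by lifting the specialized identity to $\m a(\theta)$ and then using Property~P to zero out all positions where the two sides disagree; only afterwards is the abelian term condition applied, to $t^{\m s}(s,\underline{\wec{0}})=t^{\m s}(0,\underline{\wec{0}})$. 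The same misreading resurfaces at the end of your (5), where you declare (4) for $\m s/\sigma$ ``automatic since $\m s/\sigma$ is abelian''; the correct justification, as in the paper, is that the hypothesis of (4) refers only to $\mathcal V$ while independence of a variable is preserved in quotients.
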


\begin{proof}
  
  [Item (1)]
    Since $\m a$ is abelian, the diagonal 
  $D$ is the class of a congruence on $\m a\times \m a$,
  namely the congruence generated by $D\times D$. 
  This congruence restricts to $\m a(\theta)$ to have $D$ as a class.
  Since $\theta$ is nontrivial, it properly contains $D$,
  so the congruence $\Delta$ of
  $\m a(\theta)$ generated by $D\times D$ is proper. Equivalently, 
  $\m s = \m a(\theta)/\Delta$ is nontrivial.
  
\bigskip

  [Item (2)]
Since $D$ is a subuniverse of $\m a(\theta)$, $\{D/\Delta\}=\{0\}$
is a subuniverse of $\m s$.

\bigskip

[Item (3)] To show that $\m s$ has Property~P,
choose  $p(\wec{x})$ and $\wec{s}\in S^n$ 
such that $p(\wec{s})=0$. Our goal
is to show that $p(\wec{0}) = 0$.

Express $p(\wec{x})$ as $t^{\m s}(\wec{x},\wec{u})$
for some 
term $t(\wec{x},\wec{y})$ and for some
tuple $\wec{u}$ with coordinates in  $S$.
Also, express 
the coordinates $s_i$ and $u_j$ 
of the tuples $\wec{s}$ and $\wec{u}$
as $s_i = (a_i,b_i)/\Delta$ and $u_j = (v_j,w_j)/\Delta$ where
$(a_i,b_i), (v_j,w_j)\in\theta$. Then $p(\wec{s})=0$ may be expressed as
$t^{\m a(\theta)}\big((\wec{a},\wec{b}),(\wec{v},\wec{w})\big)\in D$,
or 
\[
t^{\m a}(\underline{\wec{a}},\wec{v})=t^{\m a}(\underline{\wec{b}},\wec{w}).
\]
Since $\theta$ is strongly abelian, by 
the strong term condition we derive that
\[
t^{\m a}(\underline{\wec{a}},\wec{v})=t^{\m a}(\underline{\wec{a}},\wec{w})
\]
holds,
which may be expressed as
$t^{\m a(\theta)}((\wec{a},\wec{a}),(\wec{v},\wec{w}))\in D$,
or $p(\wec{0})=p((\wec{a},\wec{a})/\Delta)=0$.

\bigskip

[Item (4)]
Assume for the sake of
 simplicity that $i=1$ in the statement of (4),
that is,
${\mathcal V}\models t(x,\wec{w})=t(y,\wec{z})$.
By specializing if necessary we may assume further
that $w_j, z_j\in\{x,y\}$ for all $j$. Our goal is to show that
$t^{\m s}(x_1,\ldots,x_n)$ 
is independent of its first variable.

\begin{clm}
  For any $s\in S$, $t^{\m s}(s,0,0,\ldots,0)=0$.
\end{clm}

\cproof
The identity $t(x,\wec{w})=t(y,\wec{z})$ may be written
symbolically as
\[t((x,y),(\wec{w},\wec{z}))\in D,\]
where
$(x,y)$ and each $(w_j,z_j)$ belong to the set
$\{(x,y), (y,x), (x,x), (y,y)\}$.

Choose $s\in S$ and represent it as
$s = (a,b)/\Delta$ for some pair $(a,b)\in\theta$.
Each of the pairs $(a,b), (b,a), (a,a), (b,b)$
belongs to $\theta$, so we may substitute $a$'s and $b$'s
for $x$'s and $y$'s to obtain that
\[
t^{\m a(\theta)}((a,b),(\wec{c},\wec{d}))\in D,
\]
where each $(c_j,d_j)$ is one of the elements of
$\{(a,b), (b,a), (a,a), (b,b)\}$. 
Factoring by $\Delta$ yields
\begin{equation}\label{weird}
t^{\m s}\big((a,b)/\Delta,(\wec{c},\wec{d})/\Delta\big) = 
t^{\m s}\big(s,\underline{(\wec{c},\wec{d})/\Delta}\big) = 0.
\end{equation}
Now we
apply Property~P to 
the polynomial $p(\wec{y})=t^{\m s}(s,\wec{y})$
to change
the underlined values 
in (\ref{weird}) 
to $0$. We obtain that
$t^{\m s}(s,\wec{0})=0$, as desired.
\cqed

\bigskip
Recall that $\m s\in\mathcal{V}$ is abelian. Therefore,
for arbitrary $s\in S$, 
we may
apply the term condition to
\[
t^{\m s}(s,\underline{\wec{0}}) = t^{\m s}(0,\underline{\wec{0}}) =0
\]
to obtain
\[
t^{\m s}(s,\underline{\wec{u}}) = t^{\m s}(0,\underline{\wec{u}})
                                        \phantom{{}=0}
\]
for any $\wec{u}$. This is what it means for $t^{\m s}(x_1,\ldots,x_n)$
to be independent of its first variable.

\bigskip

[Item (5)]
Let $R$ be the reflexive compatible relation on $\m s$
generated by $\{0\}\times S$. 
Hence $R$ consists of all pairs $(p(\wec{0}),p(\wec{s}))$
where $\wec{s}$ is a tuple of elements of $\m S$
and $p$ is a polynomial of $\m s$.
Property~P asserts exactly that
$(x,0)\in R$ implies $x=0$. The transitive closure $R^*$ of $R$
also has this property. 
Therefore the
symmetrization $\sigma:= R^*\cap (R^*)^{\cup}$
is a congruence on $\m s$ and $\leq := R^*/\sigma$ is a compatible
partial order on the quotient
$\m s/\sigma$.
This partial order
contains $(\{0\}\times S)/(\sigma\times\sigma)$, so
$0\leq s$ for every 
$s\in S/\sigma$.

Note that $\m s/\sigma$ satisfies all of the earlier properties.
(1):~
The quotient $\m s/\sigma$ is nontrivial, since
$\m s$ is nontrivial and $\{0\}$ is a singleton class
of $\sigma$. (2):~
$\{0\}/\sigma$ is a singleton subuniverse of the quotient.
(3):~ Property~P is easily derivable from a
lower bounded compatible order:
$0\leq p(\wec{0})\leq p(\wec{s})$ for any $\wec{s}$,
so $p(\wec{s})=0$ implies $p(\wec{0})=0$.
(4):~ The 
assumption
of part (4) of the
Lemma statement 
depends on $\mathcal{V}$ 
only, while the conclusion is
preserved when taking quotients.
\end{proof}

The properties that have been proved for $\m s = \m s_{\m a,\theta}$
and its quotient $\m s/\sigma$
prevent $\mathcal V$ from being affine. For example, no nontrivial
affine algebra can satisfy Property~P:
let $p(x) = x-s$ for some $s\in S\setminus\{0\}$.
Then $p(s)=0$ while $p(0)\neq 0$. In fact,
this polynomial has no fixed points at all.

Similarly, an affine algebra has no compatible reflexive relations
other than equivalence
relations. If the compatible partial order
in (5) was an equivalence relation, then it would be discrete.
For the discrete order to have a least element $0$, the
underlying set could have only one element, contrary to item (1).

Also, it is not hard to show that 
a variety that contains
an algebra $\m s/\sigma$ satisfying
the property described in item (4) cannot satisfy 
any nontrivial idempotent Maltsev condition, while
affine varieties satisfy strong idempotent Maltsev
conditions
(they in fact have a Maltsev-term).
These observations justify the following definition.
\begin{df}\label{affine_obstruction}
An algebra $\m s$ is called an \emph{affine obstruction}
if it contains an element $0$ such that 
conditions (1)--(4) of Lemma~\ref{nonaffine} hold for 
$\m s$ and the variety
$\mathcal V$ generated by $\m S$.
\end{df}

\begin{thm}\label{affine}
  The following are equivalent for an abelian variety
  $\mathcal V$.
  \begin{enumerate}
  \item $\mathcal V$ is not affine.
  \item $\mathcal V$ satisfies no nontrivial idempotent Maltsev condition.
  \item $\mathcal V$ contains an 
    algebra
    that has a 
    nontrivial
    strongly abelian 
    congruence.
  \item $\mathcal V$ contains an affine obstruction.
    \end{enumerate}
  \end{thm}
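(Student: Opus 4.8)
The plan is to prove the cycle $(1)\Rightarrow(3)\Rightarrow(4)\Rightarrow(2)\Rightarrow(1)$, so that Lemma~\ref{nonaffine}, the remarks following it, and the classical theory of affine algebras take care of three steps and isolate $(1)\Rightarrow(3)$ as the one substantial point.

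\emph{The three routine steps.} For $(2)\Rightarrow(1)$ I argue contrapositively: an affine variety is polynomially equivalent to modules, so it has the term $m(x,y,z)=x-y+z$ with $m(x,x,y)=y$ and $m(x,y,y)=x$; this idempotent Maltsev term witnesses a nontrivial idempotent Maltsev condition, so an affine variety cannot satisfy (2). For $(3)\Rightarrow(4)$, given $\m a\in\mathcal V$ with a nontrivial strongly abelian $\theta\in\Con(\m a)$, Lemma~\ref{nonaffine} produces $\m s=\m s_{\m a,\theta}$ with $0=D/\Delta$ satisfying (1)--(4), which is exactly what it means for $\m s$ to be an affine obstruction (Definition~\ref{affine_obstruction}). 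For $(4)\Rightarrow(2)$ I sharpen the observation in the remarks: if $\mathcal V$ contained an affine obstruction $\m s$ yet satisfied some nontrivial idempotent Maltsev condition, then by Taylor's theorem $\mathcal V$ would possess a Taylor term $f$, i.e.\ an idempotent term admitting, for each coordinate $i$, an identity of exactly the shape used in (4) that singles out the $i$th variable. These identities hold throughout $\mathcal V$, hence in $V(\m s)$, so (4) applies and forces $f^{\m s}$ to be independent of every one of its variables; thus $f^{\m s}$ is constant, and idempotence then collapses $S$ to a single point, contradicting (1). Hence an affine obstruction excludes every nontrivial idempotent Maltsev condition.

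\emph{The crux, $(1)\Rightarrow(3)$.} I would prove the contrapositive: if no member of $\mathcal V$ carries a nontrivial strongly abelian congruence, then $\mathcal V$ is affine. The reduction is to build a Maltsev term and then invoke the classical theorem of Gumm and Herrmann (see \cite{freese-mckenzie}) that an abelian variety with a Maltsev term is affine. In an abelian variety it is enough to produce a \emph{difference term} $d$: the difference-term law only gives $d(x,y,y)\mathrel{[1,1]}x$, but this collapses to the identity $d(x,y,y)=x$ because $[1,1]=0$ in every abelian algebra, upgrading $d$ to a Maltsev term. So the whole weight of the implication falls on extracting a difference term from the assumption that the strong term condition never fails nontrivially.

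\emph{Where the difficulty lies.} This extraction is the obstacle flagged just before Lemma~\ref{nonaffine}. For locally finite $\mathcal V$ it is routine: an abelian variety has type set contained in $\{\utyp,\atyp\}$, a non-affine one must exhibit a prime quotient of type $\utyp$, and localizing there reads off a nontrivial strongly abelian congruence. Without local finiteness there is no localization to invoke, so in its place I would run the finiteness-free commutator machinery of \cite{kearnes-kiss}, playing the term-condition commutator against its strong (rectangular) refinement. A failure of the difference term must be witnessed by a term together with a configuration of tuples; the task is to promote that configuration---by passing to a subalgebra of a suitable power and factoring, in the spirit of the construction of $\m s_{\m a,\theta}$---into a genuine violation of the \emph{strong} term condition, that is, into a nontrivial strongly abelian congruence. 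Ensuring that it is the strong term condition, and not merely the ordinary one, that fails, with no finiteness available to force the relevant unary maps to stabilize, is the step I expect to be hardest.
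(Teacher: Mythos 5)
Your three ``routine'' steps are sound: $(2)\Rightarrow(1)$ via the Maltsev term $x-y+z$ of an affine variety, $(3)\Rightarrow(4)$ via Lemma~\ref{nonaffine}, and $(4)\Rightarrow(2)$ via a Taylor term. The Taylor-term argument is in fact a correct, explicit rendering of the remark the paper leaves unproved just before Definition~\ref{affine_obstruction}: the Taylor identities for coordinate $i$ have exactly the two-variable shape required in item (4) of Lemma~\ref{nonaffine}, they persist in the subvariety generated by $\m s$ (which is the variety to which Definition~\ref{affine_obstruction} relativizes condition (4)), independence in every coordinate makes $f^{\m s}$ constant, and idempotence then collapses $S$ to a point, contradicting (1).

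The genuine gap is $(1)\Rightarrow(3)$, which you yourself flag as unfinished: everything after ``I would run the finiteness-free commutator machinery'' is a hope, not an argument, and it is precisely where the whole content of the theorem lies. Promoting a failure of the difference term to a failure of the \emph{strong} term condition without local finiteness cannot be done by mimicking the $\m s_{\m a,\theta}$ construction, which goes in the opposite direction (from a strongly abelian congruence to an obstruction), and your locally finite tame-congruence argument does not apply since the theorem assumes no finiteness. Note that the paper does not prove this implication from scratch either; it decomposes the cycle so that the hard step is a citation: $(2)\Leftrightarrow(3)$ is part of Theorem~3.13 of \cite{kearnes-kiss} --- exactly the finiteness-free theory you were hoping to reconstruct --- while $(1)\Rightarrow(2)$ is proved contrapositively: a nontrivial idempotent Maltsev condition forbids pentagons with certain abelian intervals in congruence lattices (Theorem~4.16(2) of \cite{kearnes-kiss}); since the variety is abelian, all such intervals are abelian, so congruence lattices contain no pentagons at all, the variety is congruence modular, and abelian together with congruence modular yields affine by Gumm--Herrmann (see \cite{freese-mckenzie}). (The paper gives a second route through join terms, rectangular tolerances, and Theorem~4.10 of \cite{kearnes-szendrei}.) To repair your proof, replace the direct attack on $(1)\Rightarrow(3)$ by this $(1)\Rightarrow(2)$ argument plus the cited implication $(2)\Rightarrow(3)$; the cycle then closes, since you already have $(3)\Rightarrow(4)$, $(4)\Rightarrow(2)$, and $(2)\Rightarrow(1)$.
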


\begin{proof}
  {[$(1)\Rightarrow(2)$]} 
  (First proof.)
  We argue the contrapositive, so assume that $\mathcal V$ satisfies
  a nontrivial idempotent Maltsev condition.
  By Theorem 4.16~(2) of \cite{kearnes-kiss}, congruence
  lattices of algebras
  in $\mathcal V$ omit pentagons with certain specified
  abelian intervals. Since $\mathcal V$ is abelian,
  all intervals in congruence lattices of members are abelian.
  Hence there are no pentagons in congruence lattices of
  members of $\mathcal V$, which means that $\mathcal V$ is congruence
  modular. In this context it is known that
  abelian varieties are affine (see \cite{freese-mckenzie}).

[$(1)\Rightarrow(2)$] (Second proof.)
  Again we argue the contrapositive, so assume that $\mathcal V$ satisfies
  a nontrivial idempotent Maltsev condition.
  By Theorem 3.21 of \cite{kearnes-kiss}, $\mathcal V$ has a join term.
  The join term
  acts as a semilattice operation on blocks of any rectangular tolerance
  of an algebra in $\mathcal V$. Since 
  every algebra in $\mathcal{V}$ is abelian and
  there are no
  nontrivial abelian semilattices, it follows that
  rectangular tolerances in $\mathcal V$ are
  trivial. (This fact can also be deduced from Corollary~5.15 of
  \cite{kearnes-kiss}.) Now by Theorem~5.25 of \cite{kearnes-kiss},
  it follows that $\mathcal V$ satisfies an idempotent Maltsev condition
  that fails in the variety of semilattices. By Theorem~4.10
  of \cite{kearnes-szendrei}, $\mathcal V$ is affine.

[$(2)\Leftrightarrow(3)$]
This is part of Theorem~3.13 of \cite{kearnes-kiss}.

[$(3)\Rightarrow(4)$]
If $\mathcal V$ contains an algebra $\m a$ with a 
nontrivial strongly
abelian congruence~$\theta$, then it contains 
$\m s = \m s_{\m a,\theta} :=\m a(\theta)/\Delta$,
which is an affine obstruction by Lemma~\ref{nonaffine}.

[$(4)\Rightarrow(1)$]
Here it suffices to prove that an affine obstruction
for $\mathcal V$ prevents $\mathcal V$ from being
affine. This was explained right after the proof
of Lemma~\ref{nonaffine}.
\end{proof}

\section{Varieties whose finitely generated members are free}
In this section we investigate the class of varieties 
whose finitely generated members are free. 
This class
of varieties is closed under definitional equivalence.
The symbol
$\mathcal V$ will be used only to denote some nontrivial member
of this class.
We shall divide our analysis of this class
into two cases: the subclass of 
varieties with no $0$-ary function symbols
versus 
the subclass of varieties with at least one $0$-ary function symbol.

We shall prove that if the
finitely generated members of $\mathcal V$ are free,
then $\mathcal V$ must be definitionally
equivalent to the variety of sets, pointed sets, vector
spaces over a division ring, or affine spaces over a division
ring. It is obvious that each of these varieties
has the property that its finitely generated members are free.

\subsection{Varieties without constants}\label{subsection1}
First we will consider the case when $\mathcal V$ has no $0$-ary
function symbols.
We may write the $m$-generated free algebra in $\mathcal V$
as $\m f_{\mathcal V}(m)$, or as
$\m f_{\mathcal V}(X)$ for some $m$-element set $X$.

\begin{thm}\label{idempotent}
  Assume that 
  $\mathcal V$ is a nontrivial variety such that the
  finitely generated algebras in $\mathcal V$ are free.
  If $\mathcal V$ 
  has no $0$-ary function symbols, then $\mathcal V$
  is definitionally equivalent
  to the variety of sets or to a variety of affine
  spaces over a division ring.
  \end{thm}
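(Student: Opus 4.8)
The plan is to force $\mathcal V$ into the idempotent, abelian world where the dichotomy of Theorem~\ref{affine} applies, and then to match its two outcomes with the two varieties in the conclusion. The first step is to show that $\mathcal V$ is idempotent, i.e.\ that $\mathcal V\models t(x,\dots,x)=x$ for every term $t$. Because $\mathcal V$ has no $0$-ary symbols, the free algebra on the empty set is empty, so every nontrivial finitely generated free algebra has rank at least $1$, and idempotence amounts to $\m f_{\mathcal V}(1)$ being a one-element algebra. To prove the latter I would rule out a nontrivial unary term $u$: the subalgebra of $\m f_{\mathcal V}(1)$ generated by $u(x)$ is again finitely generated, hence free of rank one, so $x\mapsto u(x)$ embeds $\m f_{\mathcal V}(1)$ into itself, while collapsing $x$ with $u(x)$ yields a one-generated (hence free) quotient that must be trivial, since a rank-one quotient would make $u(x)=x$ an identity. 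A short analysis of this torsion-like behaviour inside $\m f_{\mathcal V}(2)$, where every one-generated subalgebra is free of rank one, then produces a contradiction. Since both the variety of sets and any variety of affine spaces are idempotent, nothing is lost.

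Next I would prove that $\mathcal V$ is abelian. The reduction is clean: the term condition involves only finitely many elements, so $\mathcal V$ is abelian as soon as every finitely generated algebra is, and these are exactly the free algebras $\m f_{\mathcal V}(n)$; moreover, in a free algebra an equation holds precisely when it is an identity of $\mathcal V$. The substance is to verify the term condition in each $\m f_{\mathcal V}(n)$, and this is where I expect the main obstacle to lie. A failure of the term condition identifies two distinct elements of $\m f_{\mathcal V}(n+1)$ that are collapsed by one retraction onto $\langle\bar g\rangle$ but separated by another, and one must convert the hypothesis that \emph{every} homomorphic image of a finitely generated algebra is again free into the nonexistence of such a configuration. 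Formal substitution alone does not suffice here, precisely because abelianness is a genuine extra constraint: the freeness of all quotients, not merely of the free algebras themselves, has to be used in an essential way.

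Once $\mathcal V$ is known to be idempotent and abelian, Theorem~\ref{affine} splits the argument into two cases matching the conclusion. If $\mathcal V$ is affine, then being idempotent and affine it is definitionally equivalent to a variety of affine spaces over a ring $R$; the freeness of its finitely generated members forces every finitely generated $R$-module to be free, whence $R$ is a division ring, and $\mathcal V$ is a variety of affine spaces over a division ring. If $\mathcal V$ is not affine, then by Theorem~\ref{affine} it contains an affine obstruction $\m s$ as in Definition~\ref{affine_obstruction}, and I would play the rigidity recorded in Lemma~\ref{nonaffine} — Property~P, the variable-independence of part~(4), and the lower-bounded compatible partial order of part~(5) — against the freeness of all finitely generated algebras to conclude that every term operation of $\mathcal V$ must be a projection. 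This identifies $\mathcal V$ with the variety of sets and completes the proof.
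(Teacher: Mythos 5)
Your plan hinges on two steps for which you offer no argument, and both are precisely where the difficulty of the theorem lives. The first is abelianness: you yourself flag verifying the term condition in the free algebras as ``the main obstacle'' and then leave it unresolved, so the entire reduction to Theorem~\ref{affine} is conditional on an unproven lemma. Note that the paper proves abelianness only in the case \emph{with} a constant (Lemma~\ref{abelian}), and that proof uses the zero element essentially: it works inside the subalgebra of $\m f\times\m f$ generated by $(0,x)$ and $(x,0)$, shows $\eta_1=\cg\big((0,0),(0,x)\big)$ is principal, and invokes the three-coatom count of Lemma~\ref{3coatoms}, whose homomorphisms are defined by sending generators to $0$. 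None of this has a constant-free analogue, and in the idempotent case the paper deliberately routes \emph{around} abelianness: Magari's theorem yields a finitely generated simple member, which by the freeness hypothesis and idempotence must be $\m f_{\mathcal V}(2)$; a minimal subvariety $\mathcal M$ inherits the freeness property and falls under Kearnes's classification of minimal idempotent varieties (sets, semilattices, affine modules over a simple ring, or congruence distributive), with semilattices and the congruence distributive option excluded by counting coatoms in the congruence lattice of $\m f_{\mathcal M}(2)\times\m f_{\mathcal M}(2)$, and finally $\mathcal V=\mathcal M$ via the observation that a free algebra of a variety of sets or affine spaces cannot have minimal generating sets of two different cardinalities.

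The second gap is your nonaffine case. Even granting abelianness, the claim that the rigidity of Lemma~\ref{nonaffine} can be ``played against freeness'' to show every term operation is a projection is exactly the step the paper identifies as out of reach in general: abelian nonaffine varieties are poorly understood, and the known arguments deriving a sets-like structure from an affine obstruction require local finiteness and the classification of minimal abelian varieties \cite{kkv1,szendrei}. Moreover the affine obstruction $\m s$ produced by Theorem~\ref{affine} need not be finitely generated, so the freeness hypothesis does not apply to it directly. Two smaller flaws: in your idempotence argument, knowing that $\m f_{\mathcal V}(1)/\cg\big(x,u(x)\big)$ is free of rank one does not give $\mathcal V\models u(x)=x$, since the isomorphism need not carry the image of $x$ to a free generator (the paper's route is simpler and airtight: the one-element algebra is finitely generated, hence free, hence must be $\m f_{\mathcal V}(1)$ because $\m f_{\mathcal V}(\emptyset)$ is empty and free algebras of rank $\geq 2$ in a nontrivial variety have at least two elements); and in your affine case, the finitely generated algebras of an affine variety are affine spaces rather than modules, so ``every finitely generated $R$-module is free'' is not what the hypothesis gives you --- the paper instead gets the division-ring conclusion from the simplicity of $\m f(2)$, realized on $R$ so that left ideals induce congruences.
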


\begin{proof}
  If 
  $\mathcal V$ has no $0$-ary function symbols, then
  $\m f_{\mathcal V}(\emptyset)$ is empty. $\m f_{\mathcal V}(1)$
  is the only candidate for the $1$-element
  algebra in $\mathcal V$. Hence $\mathcal V$ is idempotent.

  It follows from the standard proofs of Magari's Theorem
  (\cite{bu-sa}, Theorem 10.13) that
  every nontrivial variety has a {\it finitely generated} simple member.
  A free algebra $\m f_{\mathcal V}(X)$ over $X = \{x_1,x_2,\ldots\}$
  cannot be simple if $|X|>2$, since there are
    noninjective homomorphisms
    $\varepsilon_i\colon \m f_{\mathcal V}(X)\to \m f_{\mathcal V}(y,z)$ 
defined on generators by 
  \begin{equation}\label{kernels}
    x_j\mapsto
    \begin{cases}
  y & \textrm{if $j=i$}\\
  z & \textrm{else}.
  \end{cases}   
  \end{equation}
  If $\mathcal V$ is idempotent, then $\m f_{\mathcal V}(X)$
  cannot be simple for $|X|<2$, either. Thus, in our situation
  $\m f_{\mathcal V}(2)$ is the only candidate for a finitely generated
  simple member of $\mathcal V$.

  Let $\mathcal M$ be a minimal subvariety of $\mathcal V$.
  $\mathcal M$ also must contain a finitely generated simple algebra,
  and $\m f_{\mathcal V}(2)$ is the only one in $\mathcal V$
  up to isomorphism, so
  $\mathcal M$ must contain (and be generated by)
  $\m f_{\mathcal V}(2)$. 
  Every finitely generated algebra
   $\m a\in \mathcal M$
   is finitely
  generated in $\mathcal V$, hence is
   free in $\mathcal V$,
   hence satisfies the universal mapping property in~$\mathcal V$
  relative to some subset $X\subseteq A$, 
   hence
 satisfies the universal mapping property in~$\mathcal M$
  relative to the same subset, 
  hence is
  free over the same free generating set in~$\mathcal M$.
  This shows that $\mathcal M$ is also a variety whose finitely generated
  algebras are free. Also, $\m f_{\mathcal M}(2)=\m f_{\mathcal V}(2)$.

   According to Corollary 2.10 of \cite{kearnes},
  any minimal idempotent variety, like $\mathcal M$,
  is definitionally equivalent to the
  variety of sets, the variety of semilattices, a variety
  of affine modules over a simple ring, or is congruence distributive.
  
  The variety of semilattices does not have
  the property that its finitely generated members are free.

  No minimal, congruence distributive, idempotent
  variety 
  $\mathcal{M}$ has the property
  that its finitely generated members are free,
  as we now explain. If otherwise, then since
  $\m f_{\mathcal M}(x,y)\times \m f_{\mathcal M}(x,y)$
  is finitely generated (by $\{x,y\}\times \{x,y\}$), it must be
  isomorphic to $\m F_{\mathcal M}(m)$ for some $m$. Since
  $\m f_{\mathcal M}(x,y)\times \m f_{\mathcal M}(x,y)$ is not trivial or simple,
we have $m>2$.   The homomorphisms $\{\varepsilon_i\}_{i=1}^m$
    described in (\ref{kernels})
  (with subscript $\mathcal M$ in place of~$\mathcal V$)
  map $\m f_{\mathcal M}(m)$
  onto the simple algebra $\m f_{\mathcal M}(2)$,
  and $\varepsilon_i$ has kernel
  different from that of $\varepsilon_j$ when $i\neq j$.
  Thus $\m f_{\mathcal M}(m)$ has at least $m$ distinct coatoms 
  of the form $\ker(\varepsilon_i)$
  in its congruence lattice.
  From this it follows that 
  $\m f_{\mathcal M}(x,y)\times \m f_{\mathcal M}(x,y)\cong \m f_{\mathcal M}(m)$,
  $m>2$,
  has at least
  $3$ coatoms in its congruence lattice.
  But in a congruence distributive variety, the square of a simple
  algebra has exactly two coatoms in its congruence lattice.

  Now consider the case where $\mathcal M$ is a
  variety of affine (left) modules over some ring $R$.
  One realization of $\m f_{\mathcal M}(2)$
  has universe $R$, generators $0, 1\in R$, and term operations
  of the form
  \[
r_1x_1+\cdots + r_hx_h,\quad r_i\in R, \quad \sum r_i = 1.
\]
Each left ideal of $R$ induces a congruence on this algebra.
Since $\m f_{\mathcal M}(2)$ is simple, $R$ can have no nontrivial
proper left ideals, hence $R$ must be a division ring.

We have thus far argued that if $\mathcal V$ has the property that
its finitely generated members are free, and $\mathcal M$ is a minimal
subvariety of $\mathcal V$, then $\mathcal M$ is definitionally
equivalent to the variety
of sets or a variety of affine modules over a division ring.
  We now argue that $\mathcal V = \mathcal M$. If this is not
  the case, then there is a finitely generated
  algebra in $\mathcal V\setminus\mathcal M$,
  which we may assume is $\m a:=\m f_{\mathcal V}(m)$.
  By its very definition, $\m a$ has an $m$-element generating set
  that is minimal under inclusion as a generating set.
  Now let $\m b$ be the $m$-generated free algebra in $\mathcal M$.
  So
  $\m b$ also has an $m$-element minimal generating set.
  Since
  $\m b\in{\mathcal M}$, 
  we get that
  $\m b\in {\mathcal V}$, 
  but
  $\m b$ cannot be isomorphic to $\m a$,
  because $\m a\notin\mathcal{M}$. Hence
  $\m b \cong \m f_{\mathcal V}(n)$
  for some $n\neq m$. This implies that $\m b$
  has an $n$-element minimal generating set as well as an
  $m$-element minimal generating set.
  But $\mathcal M$ is definitionally
  equivalent to the variety of sets or 
  to a
  variety
  of affine spaces
  over a division ring,
  so it is not possible for $\m b$ to have minimal
  generating sets of different cardinalities.
  We conclude that $\mathcal V = \mathcal M$.
    \end{proof}

\subsection{Varieties with constants}\label{subsection2}

We still assume that 
$\mathcal V$ is a nontrivial variety whose finitely generated
members are free. In this subsection we also assume
that $\mathcal V$ has $0$-ary function symbols in its language.
In this situation, 
$\m f_{\mathcal V}(\emptyset)$ must
be the $1$-element algebra in $\mathcal V$,
so there is only one constant up to equivalence.
We will assume that there is exactly one constant in the language and
use $0$ to denote it. In any algebra $\m a\in\mathcal V$
the set $\{0\}$ is the unique 1-element subuniverse of $\m a$.
We will refer to $0\in\m a$ as the \emph{zero element} of $\m a$.

In the situation we are in now, 
when $\m f_{\mathcal V}(\emptyset)=\{0\}$,
it is
$\m f_{\mathcal V}(1)$ rather than
$\m f_{\mathcal V}(2)$ that is the only candidate
for the finitely generated simple algebra of $\mathcal V$.
To see this, note that when $m$ is greater than $1$, then 
$\m f_{\mathcal V}(x_1,\ldots,x_m)$ has at least
three distinct kernels of homomorphisms onto $\m f_{\mathcal V}(x)$,
namely the kernels of the homomorphisms defined on generators by
  \begin{enumerate}
  \item $x_1\mapsto 0$; $x_2, \ldots, x_m\mapsto x$, 
  \item $x_1\mapsto x$; $x_2, \ldots, x_m\mapsto 0$, and
  \item $x_1, x_2, \ldots, x_m\mapsto x$.
  \end{enumerate}
  To see that the kernels of these homomorphisms are distinct,
  it suffices to note that they restrict differently to
  the set $\{0,x_1,\ldots,x_m\}\subseteq F_{\mathcal V}(x_1,\ldots,x_m)$.
  Thus $\m f_{\mathcal V}(m)$ cannot be simple when $m>1$, nor
  can it be simple when $m=0$,
  hence $\m f_{\mathcal V}(1)$ is the %only candidate for a
  finitely generated simple member of $\mathcal V$. This argument
  also shows that, if $m>1$, then $\m f_{\mathcal V}(m)$ has at least
  $3$ coatoms in its congruence lattice. We record these observations as:

  \begin{lm}\label{3coatoms}
    If $\mathcal V$ is a nontrivial variety with
    at least one $0$-ary function symbol in its language, and
    all finitely generated members of $\mathcal V$ are free, then
    \begin{enumerate}
    \item 
      $\m f_{\mathcal V}(\emptyset)$ 
      has one element.
    \item $\m f_{\mathcal V}(1)$ is simple.
    \item $\m f_{\mathcal V}(m)$ has at least $3$ distinct coatoms in
      its congruence lattice for every finite $m>1$.      $\Box$
      \end{enumerate} 
  \end{lm}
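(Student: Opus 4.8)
The plan is to establish the three items separately. Item~(1) is a consequence of the presence of the constant $0$ together with the hypothesis that finitely generated members are free, whereas items~(2) and~(3) both follow by producing explicit surjections onto $\m f_{\mathcal V}(1)$ and reading off their kernels in $\Con(\m f_{\mathcal V}(m))$.

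For item~(1), I would argue as follows. The trivial (one-element) algebra belongs to $\mathcal V$, and because the language contains the constant $0$, this algebra is generated by the empty set; it is therefore a finitely generated member of $\mathcal V$ and, by hypothesis, free. It cannot be free on a nonempty set: if $\m f_{\mathcal V}(x)$ were trivial, then $\mathcal V$ would satisfy $x=0$ and hence be trivial, contrary to assumption. Thus the trivial algebra is \emph{free on the empty set}, that is, $\m f_{\mathcal V}(\emptyset)$ is the one-element algebra, which also identifies the unique $1$-element subuniverse $\{0\}$ with $\m f_{\mathcal V}(\emptyset)$.

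For items~(2) and~(3), I would first note that $\m f_{\mathcal V}(1)$ has at least two elements, since $x=0$ in $\m f_{\mathcal V}(1)$ would again force $\mathcal V$ to be trivial. By Magari's Theorem, already invoked in the proof of \thref{idempotent}, $\mathcal V$ has a finitely generated simple member; being finitely generated it is free, hence isomorphic to $\m f_{\mathcal V}(m)$ for some finite $m$. I would then rule out every $m\neq 1$. The case $m=0$ is immediate from item~(1). For $m>1$ I would introduce the three homomorphisms $\m f_{\mathcal V}(x_1,\ldots,x_m)\to\m f_{\mathcal V}(x)$ determined on free generators by (i) $x_1\mapsto 0$ and $x_2,\ldots,x_m\mapsto x$; (ii) $x_1\mapsto x$ and $x_2,\ldots,x_m\mapsto 0$; (iii) $x_1,\ldots,x_m\mapsto x$. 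Each is surjective onto the simple algebra $\m f_{\mathcal V}(1)$, so each kernel is a coatom of $\Con(\m f_{\mathcal V}(m))$. Comparing the three maps on the finite set $\{0,x_1,\ldots,x_m\}$---for instance $(x_1,x_2)$ lies in the third kernel but not the first, $(x_1,0)$ lies in the first but not the second, and $(x_2,0)$ lies in the second but not the third---shows the three kernels are pairwise distinct. This simultaneously proves that $\m f_{\mathcal V}(m)$ is not simple for $m>1$ (so $\m f_{\mathcal V}(1)$ is the only surviving candidate and must be the simple member furnished by Magari, giving item~(2)) and that $\Con(\m f_{\mathcal V}(m))$ has at least three coatoms (item~(3)).

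The two places needing care are the identification of $\m f_{\mathcal V}(\emptyset)$ in item~(1) and the bookkeeping for the three kernels. In item~(1) the subtle point is distinguishing ``free on the empty set'' from ``free on a nonempty set that happens to collapse,'' which is precisely where nontriviality of $\mathcal V$ enters. In items~(2)--(3) the only genuine content is that a surjection onto a simple algebra has a coatomic kernel---immediate from the correspondence theorem---and the distinctness of the three kernels, which reduces to the finite comparison on generators sketched above. Neither step is deep, so I expect the main work to be purely organizational: assembling observations already present in the surrounding discussion into a clean statement.
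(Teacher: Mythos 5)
Your proof is correct and follows essentially the same route as the paper: item (1) by noting the trivial algebra is finitely generated (via the constant) and can only be free on $\emptyset$ in a nontrivial variety, and items (2)--(3) via Magari's theorem together with the three homomorphisms onto $\m f_{\mathcal V}(1)$ whose kernels are distinguished on $\{0,x_1,\ldots,x_m\}$, exactly as in the paper's discussion preceding the lemma. One presentational nit: assert coatomicity of the kernels only \emph{after} item (2) is established---first use the distinctness of the three proper kernels (which needs only nontriviality of $\m f_{\mathcal V}(1)$) to rule out simplicity of $\m f_{\mathcal V}(m)$ for $m>1$, then identify $\m f_{\mathcal V}(1)$ as the simple member, then conclude the kernels are coatoms---which is the order your closing parenthetical already indicates.
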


  Later we will need to remember that, from part (3) of this lemma,
  any finitely generated, nontrivial, nonsimple member of $\mathcal V$
  has at least 3 distinct coatoms in its congruence lattice.

Suppose that $\m a\in \mathcal V$ and $a\in A\setminus \{0\}$.
Then there is a homomorphism $\m f_{\mathcal V}(x)\to \m a$
mapping
$x\mapsto a$,
which cannot be constant
(since $0\mapsto 0$).
By the
simplicity of $\m f_{\mathcal V}(x)$, this homomorphism
must be 
injective.
This shows
that $a$ is a free generator of the subalgebra $\langle a\rangle\leq \m a$.
We record this as:

    \begin{lm}\label{freely}
    If\/ $\mathcal V$ is a nontrivial variety with
    a $0$-ary function symbol
    $0$, 
    and
    all finitely generated members of $\mathcal V$ are free, then
    any nonzero element of any 
    algebra in
    $\mathcal V$
    {freely} generates a subalgebra isomorphic to $\m f_{\mathcal V}(x)$. $\Box$
  \end{lm}

\begin{lm}\label{abelian}
    If\/ $\mathcal V$ is a nontrivial variety with
    a $0$-ary function symbol, and
    all finitely generated members of $\mathcal V$ are free, then
    $\m f_{\mathcal V}(x)$ is abelian.
  \end{lm}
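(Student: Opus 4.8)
The plan is to establish the term condition for $\m f_{\mathcal V}(x)$, and it suffices to verify it in the form with a single distinguished (``changing'') variable, since the general form follows by changing the distinguished coordinates one at a time. Write $F$ for the universe of $\m f_{\mathcal V}(x)$; every element of $F$ is $s^{\m f_{\mathcal V}(x)}(x)$ for a unary term $s$, and $s^{\m f_{\mathcal V}(x)}(0)=0$ because $s(0)$ is a constant term and $\m f_{\mathcal V}(\emptyset)=\{0\}$. In particular the maps $a\mapsto(a,0)$, $a\mapsto(0,a)$, $a\mapsto(a,a)$ are all homomorphisms $\m f_{\mathcal V}(x)\to\m f_{\mathcal V}(x)\times\m f_{\mathcal V}(x)$, a fact I use freely below.

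The engine of the argument is a pair of \emph{specialization embeddings}. I claim that the two homomorphisms $\m f_{\mathcal V}(x,y)\to\m f_{\mathcal V}(x)\times\m f_{\mathcal V}(x)$ determined by $x\mapsto(x,0),\ y\mapsto(0,x)$ and by $x\mapsto(0,x),\ y\mapsto(x,x)$ are injective. Concretely, this says that a binary term is determined, as an element of $\m f_{\mathcal V}(x,y)$, by its two specializations obtained by setting one argument to $0$; and also by the specialization setting the first argument to $0$ together with its diagonal. To prove injectivity I would argue that each image is generated by two elements, hence finitely generated and therefore free; each image carries the two coordinate projections onto the simple algebra $\m f_{\mathcal V}(x)$ with \emph{distinct} kernels, so it is neither trivial nor simple; by Lemma~\ref{3coatoms} its rank is therefore exactly $2$, and the two displayed generators form a free basis. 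Hence each map is an isomorphism onto its image.

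Granting the embeddings, I verify the single-variable term condition. Fix a term $t(y,\wec z)$, elements $a,b\in F$, and data tuples $\wec u,\wec v$ over $F$, written as unary terms $a(x),b(x),\wec u(x),\wec v(x)$; note $\wec u(0)=\wec v(0)=\wec 0$ and $a(0)=0$. \emph{Step 1 (``holds at $a$'' implies ``holds at $0$'').} Put $G(x,y):=t(a(y),\wec u(x))$ and $H(x,y):=t(a(y),\wec v(x))$. Setting the first argument to $0$ gives $G(0,y)=t(a(y),\wec 0)=H(0,y)$, while the diagonals $G(x,x)=t(a,\wec u)$ and $H(x,x)=t(a,\wec v)$ agree by hypothesis; the second embedding forces $G=H$, and evaluating at $y=0$ yields $t(0,\wec u)=t(0,\wec v)$. \emph{Step 2 (``holds at $0$'' implies ``holds at every $b$'').} Put $R(x,y):=t(y,\wec u(x))$ and $R'(x,y):=t(y,\wec v(x))$. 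Setting the first argument to $0$ gives $R(0,y)=t(y,\wec 0)=R'(0,y)$, and setting the second argument to $0$ gives $R(x,0)=t(0,\wec u(x))$ and $R'(x,0)=t(0,\wec v(x))$, which agree by Step 1; the first embedding forces $R=R'$, so substituting $y\mapsto b(x)$ gives $t(b,\wec u)=t(b,\wec v)$. Chaining the two steps proves the term condition, hence that $\m f_{\mathcal V}(x)$ is abelian.

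The main obstacle is the embedding claim, and precisely its last step: that in a rank\nobreakdash-$2$ free algebra a two-element generating set must be a free basis, equivalently that the relevant meet of two coatoms in $\m f_{\mathcal V}(2)$ is trivial. This is a Hopfian-type rigidity of finitely generated free algebras in $\mathcal V$, and it genuinely uses the full hypothesis that \emph{all} finitely generated members are free, not merely the simplicity of $\m f_{\mathcal V}(x)$: the analogous injectivity fails already for non-abelian groups, where $xy$ and $yx$ share both $0$-specializations. Since $\mathcal V$ need not be locally finite, $\m f_{\mathcal V}(2)$ may have infinitely many coatoms, so a bare coatom count does not settle the rigidity; pinning down rank and basis behavior for finitely generated free algebras in this non-locally-finite setting is the point at which the argument must work hardest.
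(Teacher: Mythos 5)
Your reduction machinery is fine: granting the two ``specialization embeddings,'' Steps 1 and 2 correctly derive the one-variable term condition, and the one-variable form does yield the general term condition by changing coordinates one at a time. But the embeddings are precisely where the proof breaks, and your justification of them does not go through. Lemma~\ref{3coatoms} gives a \emph{lower} bound on the number of coatoms of a nontrivial, nonsimple, finitely generated algebra; it says nothing that bounds the rank of a free algebra by the size of a generating set. Since $\mathcal V$ need not be locally finite, it is a priori possible that a $2$-generated free algebra has rank greater than $2$ (the paper's Discussion section notes that free algebras of distinct finite ranks can be isomorphic in general varieties, and nothing you prove rules this out under the present hypothesis), so ``$2$-generated, free, neither trivial nor simple'' does not give rank exactly $2$. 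Worse, even granting rank $2$, you still need the specific generating pair $\{(x,0),(0,x)\}$ (respectively $\{(0,x),(x,x)\}$) to be a \emph{free basis}; that is a Hopficity-plus-basis-rigidity statement about $\m f_{\mathcal V}(2)$ which you never establish and which, as your own final paragraph concedes, is the hard point. Your group-theoretic remark ($xy$ and $yx$ share both $0$-specializations) in fact shows the embedding claim encodes abelianness-type information, so the proposal amounts to an honest reduction of the lemma to an unproven claim at least as strong as the conclusion, not a proof.

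It is worth noting that the paper sidesteps basis rigidity entirely while working inside the very same subalgebra $\m a=\sg\{(x,0),(0,x)\}\leq \m f\times\m f$. Instead of identifying $\m a$ with a free algebra on those generators, it shows the restricted projection kernel $\eta_1$ equals the principal congruence $\cg\big((0,0),(0,x)\big)$, hence is compact; it then takes $\mu$ maximal below $\eta_1$, so that in $\m a/\mu$ the congruence $\eta_1/\mu$ is simultaneously an atom and a coatom. Freeness of finitely generated members enters only through Lemma~\ref{3coatoms}: $\m a/\mu$ has at least three coatoms, and two coatoms $\alpha,\beta$ other than $\eta_1/\mu$ satisfy $\alpha\wedge(\eta_1/\mu)=0=\beta\wedge(\eta_1/\mu)$ while $(\alpha\vee\beta)\wedge(\eta_1/\mu)=\eta_1/\mu$, a meet-semidistributivity failure; basic commutator theory then forces $\eta_1/\mu$ to be abelian, and since $(\{0\}\times F)/\mu$ is an $\eta_1/\mu$-class supporting a copy of the simple algebra $\m f$, abelianness transfers to $\m f$. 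If you try to prove your embedding claim rigorously, you will likely be driven to congruence-lattice arguments of exactly this kind, at which point you would be reproducing the paper's proof rather than shortcutting it.
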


\begin{proof}
  In this proof we will abbreviate $\m f_{\mathcal V}(x)$ by $\m f$.
  
  Let $\m a$ be the subalgebra of
  $\m f\times \m f$ that is generated
  by $(0,x)$ and $(x,0)$. Let $\eta_1, \eta_2\in\Con(\m a)$
  be the restrictions to $\m a$ of the 
  coordinate projection kernels. Observe that the $\eta_1$-class
  of $0^{\m a} = (0,0)$ is the set $\{0\}\times F$, which is a
  subuniverse of $\m a$ that supports a subalgebra isomorphic to 
  $\m f$;
  hence this subalgebra is simple. 
  Similarly, the $\eta_2$-class of $0^{\m a}$,
  $F\times \{0\}$, is the universe of a simple subalgebra of $\m a$.

  $\m a$ is generated by $(0,x)$ and $(x,0)$, so
  every class of the congruence $\cg\big((0,0),(0,x)\big)$ of $\m a$ 
  contains an element of $F\times\{0\}$. As 
  $\cg\big((0,0),(0,x)\big)$ is contained in
  $\eta_1$, and each $\eta_1$-class contains exactly one
  element of $F\times \{0\}$, it follows that
  $\eta_1 = \cg\big((0,0),(0,x)\big)$.

  This shows that
  $\eta_1$ is principal,
  hence compact, so
  there
  is a congruence $\mu$ that is maximal among congruences
  strictly below $\eta_1$. $\Conb(\m a/\mu)$ contains
  a $3$-element maximal chain $0 = \mu/\mu\prec \eta_1/\mu\prec 1$.
  We apply Lemma~\ref{3coatoms}~(3) to $\m a/\mu$: the algebra $\m a/\mu$
  is nontrivial, nonsimple, and a quotient of the $2$-generated
  algebra $\m a$, so it is finitely generated. The lemma guarantees
  that $\Conb(\m a/\mu)$ has at least $3$ coatoms. The congruence
  $\eta_1/\mu$ is a coatom, but there must be at least two
  other
coatoms,
 say $\alpha, \beta\in\Conb(\m a/\mu)$.

  Since $\alpha, \beta$ and 
  $\eta_1/\mu$ 
  are 
pairwise
incomparable congruences,
  and 
  $\eta_1/\mu$ 
  is an atom in $\Conb(\m a/\mu)$,
  we have $\alpha\wedge (\eta_1/\mu) = 0 = \beta\wedge (\eta_1/\mu)$.
  We also have
  \[
(\alpha\vee\beta)\wedge (\eta_1/\mu) = 
1 \wedge (\eta_1/\mu) = \eta_1/\mu,
\]
so the interval $[0,\eta_1/\mu]$ is a meet semidistributivity failure
in $\Conb(\m a/\mu)$. It follows
from basic properties of the commutator
that $\eta_1/\mu$ is abelian.

Recall that $\{0\}\times F$ is a subuniverse of $\m a$
that is an $\eta_1$-class.
The congruence $\mu$ is strictly
smaller than $\eta_1 = \cg\big((0,0),(0,x)\big)$,
so it does not contain $\{0\}\times F$ entirely within a class.
Since the subuniverse supported by $\{0\}\times F$
is isomorphic to $\m f=\m f_{\mathcal V}(1)$, and therefore simple,
$\mu$ restricts
trivially to this set. This implies that
$(\{0\}\times F)/\mu$
is a class of $\eta_1/\mu$ 
that supports a subalgebra
of $\m a/\mu$
isomorphic to $\m f$. Since $\eta_1/\mu$ is abelian, it follows
that $\m f$ is abelian too.
  \end{proof}

\begin{lm}\label{injective}
    If\/ $\mathcal V$ is a nontrivial variety with
    a $0$-ary function symbol, and
    all finitely generated members of $\mathcal V$ are free, 
    then the nonconstant unary polynomial operations
    of $\m f_{\mathcal V}(x)$ are injective.
\end{lm}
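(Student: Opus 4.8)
The plan is to argue by contradiction. Assume a nonconstant unary polynomial $p$ of $\m f:=\m f_{\mathcal V}(x)$ has a collision $p(a)=p(b)$ with $a\neq b$, and write $p(z)=t^{\m f}(z,\wec{c})$ with $\wec{c}=(c_1,\dots,c_n)$; I will deduce that $p$ is in fact constant, a contradiction. I would first record the two structural facts I intend to use. Since $\m f=\m f_{\mathcal V}(1)$ is simple (\lmref{3coatoms}), $a\neq b$ gives $\cg^{\m f}(a,b)=1$. Since $\m f$ is abelian (\lmref{abelian}), the diagonal $D=\{(u,u)\mid u\in F\}$ of $\m f\times\m f$ is a single class of the congruence $\Delta:=\cg^{\m f\times\m f}(D\times D)$, with $(0,0)\in D$ so that $0:=D/\Delta$ is the zero element of $\m f\times\m f/\Delta$.

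By Maltsev's congruence‑generation lemma, $\cg^{\m f}(a,b)=1$ means that any two elements $s,s'\in F$ are joined by a chain whose consecutive entries form a set of the shape $\{h(a),h(b)\}$ for some unary polynomial $h$ of $\m f$. Consequently, to conclude that $p$ is constant it suffices to prove the propagation statement $p(h(a))=p(h(b))$ for every unary polynomial $h$: then $p$ takes a single value along every such chain, hence is constant on all of $F$, contradicting nonconstancy.

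The main obstacle is exactly this propagation step. One is tempted to declare $\ker p$ a congruence and invoke simplicity, but abelianness alone does \emph{not} make the kernel of a unary polynomial a congruence (strongly abelian matrix powers of pointed sets are counterexamples), so the finite‑generation hypotheses must enter here. My device is to pass to $\m f\times\m f/\Delta$. Consider the polynomial $p\times p$ of $\m f\times\m f$, namely $(z_1,z_2)\mapsto(p(z_1),p(z_2))=t^{\m f\times\m f}\big((z_1,z_2),(c_1,c_1),\dots,(c_n,c_n)\big)$. Its parameters $(c_i,c_i)$ lie on $D$, so modulo $\Delta$ they collapse to $0$, and $p\times p$ descends to the unary \emph{term} operation $q(z):=t(z,0,\dots,0)$ of $\m f\times\m f/\Delta$. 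The collision $p(a)=p(b)$ says $(p\times p)(a,b)=(p(a),p(b))\in D$, that is, $q\big((a,b)/\Delta\big)=0$.

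To finish, I would set $\m m:=\sg^{\m f\times\m f}(\{(a,b)\}\cup D)$, whose elements are precisely the pairs $(h(a),h(b))$ for unary polynomials $h$ of $\m f$. Since the quotient map is a homomorphism carrying $D$ to $0$, the image $\m m/\Delta$ is generated by the single element $(a,b)/\Delta$, which is nonzero because $a\neq b$. Thus $\m m/\Delta$ is a nontrivial, $1$‑generated member of $\mathcal V$, hence free and isomorphic to $\m f$, and by \lmref{freely} the element $(a,b)/\Delta$ is a free generator of it. The term operation $q$ sends this free generator to $0$, so by freeness $\mathcal V\models t(x,0,\dots,0)\approx 0$, whence $q$ is identically $0$ on $\m m/\Delta$. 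Evaluating $q$ at $(h(a),h(b))/\Delta\in\m m/\Delta$ and unwinding the definitions yields $(p(h(a)),p(h(b)))\in D$, i.e.\ $p(h(a))=p(h(b))$, for every unary polynomial $h$. This is the propagation statement, completing the proof.
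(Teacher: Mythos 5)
Your proof is correct, and it reaches the conclusion by a genuinely different route than the paper, while resting on the same three pillars: simplicity of $\m f_{\mathcal V}(1)$ (Lemma~\ref{3coatoms}), the diagonal $D$ being a full $\Delta$-class because $\m f$ is abelian (Lemma~\ref{abelian}), and free generation by nonzero elements (Lemma~\ref{freely}). The paper decomposes the problem: it first handles unary \emph{term} operations, showing that a collision $rs=rt$ with $s\neq t$ forces $rs=rt=0$ (by restricting $\Delta$ to the simple subalgebra generated by $(s,t)$) and then $r=rx=0$ (by restricting $\eta_1$ to the simple subalgebra generated by $(s,x)$), and afterwards transfers the result to polynomials via the term condition, which identifies $\ker p$ with the kernel of the ``twin'' term operation $t^{\m f}(x,\wec{0})$. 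You instead treat polynomials in a single pass: since the parameters of $p\times p$ lie on the diagonal, they collapse to $0$ modulo $\Delta$, the collision becomes $q(g)=0$ for the genuine term $q(z)=t(z,\wec{0})$ (genuine because $0$ is a constant symbol of the language) and the nonzero element $g=(a,b)/\Delta$, and Lemma~\ref{freely} converts this single equation into the variety-wide identity $t(x,\wec{0})\approx 0$ --- an identity-extraction move that replaces the paper's twin-term maneuver. The supporting details check out: the description of $M$ as the set of pairs $(h(a),h(b))$, and the use of $D$ being \emph{exactly} a $\Delta$-class in both directions (to get $g\neq 0$, and to pull $q=0$ back to membership in $D$). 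Two small remarks: the clause ``nontrivial $1$-generated, hence free and isomorphic to $\m f$'' is by itself slightly quick (a priori a $1$-generated free algebra might have larger rank), but your simultaneous appeal to Lemma~\ref{freely} carries the full weight, so there is no gap; and the closing Maltsev-chain propagation, while valid, is avoidable --- once you have $\mathcal V\models t(x,\wec{0})\approx 0$, the twin term is constant, so a single application of the abelian term condition already yields that $p$ itself is constant, which is precisely the mechanism by which the paper crosses from terms to polynomials.
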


\begin{proof}
  We first show that the nonconstant unary term operations
  act injectively on 
  $\m f=\m f_{\mathcal V}(x)$.
  Here we use a symbol, say $r$, for both an element of $F$
  and also for a unary term operation $r^{\m f}$ that represents
  the element $r$, i.e. $r = r^{\m f}(x)$.
  If $r,s\in F$, we will use the notation $rs$ for $r^{\m f}(s)$.
  Thus, our goal is to show that if $r\in F\setminus\{0\}$,
  then $rs=rt$ implies $s=t$ for all $s,t\in F$.
  
  Let $\eta_1, \eta_2, \Delta\in\Conb(\m f\times \m f)$ be
  the coordinate projection kernels and the congruence obtained
  from collapsing the diagonal. Suppose that $r, s, t\in F$,
  and that $rs=rt$ while $s\neq t$.

  By Lemma~\ref{freely} the
  element $(s,t)\in F\times F$ freely generates a subalgebra
  of $\m f\times \m f$ that is
  isomorphic to $\m F_{\mathcal V}(1)$, hence it is
  a simple subalgebra 
that
we denote
by
 $\m T$.
  Since $\m f$ is abelian by Lemma~\ref{abelian}, we
  have 
that $(s,t)$ and $(0,0)$ are not $\Delta$-related,
 so $\Delta|_T$ is trivial.
  But 
$rs=rt$ implies that
$(rs,rt) \equiv_\Delta (0,0)$,
 so $(rs,rt)=(0,0)$. This shows that if
  $s\neq t$ and $rs=rt$, then $rs=0=rt$.
  At least one of $s$ and $t$ is not $0$,
  and the situation between $s$ and $t$ has been symmetric up to this point,
  so assume that $s\neq 0$.

  As before, the element $(s, x)$ generates a
  simple subalgebra $\m x$ of $\m f\times \m f$, since 
$x\neq 0$.
The assumption $s\ne 0$ implies that
$(s,x)$ and $(0,0)$ are not $\eta_1$-related.
Therefore
%  We have $(s,x)\not\!\!\eta_1(0,0)$, so 
$\eta_1|_X$ is trivial.
  But $(rs,rx) \equiv_{\eta_1} (0,0)$, 
so $(rs,rx)=(0,0)$. 
  Hence $r=rx=0$.
  This shows that our statement holds for unary term operations of $\m f$.

  Now we generalize our conclusion from unary
  term operations to unary polynomial operations of $\m f$.

Assume that $p(x)=t^{\m f}(x,\wec{u})$
for some term $t$ and some tuple $\wec{u}$.
If $p(a)=p(b)$, then
\[
t^{\m f}(a,\underline{\wec{u}}) = t^{\m f}(b,\underline{\wec{u}}).
\]
$\m f$ is abelian by Lemma~\ref{abelian}, therefore the last displayed
equality
is equivalent to 
\[
t^{\m f}(a,\underline{\wec{0}}) = t^{\m f}(b,\underline{\wec{0}}),
\]
by the term condition.
This shows that the unary polynomial $p(x) = t^{\m f}(x,\wec{u})$
has the same kernel as the ``twin'' unary term operation
$t^{\m f}(x,\wec{0})$. But such kernels have been shown to
be trivial or universal in the first part of this proof,
so they remain so here. I.e., any nonconstant unary
polynomial operation acts injectively on $\m f$.
  \end{proof}

Now we are prepared to prove the main result of this subsection.

\begin{thm}
  Assume that 
  $\mathcal V$ is a nontrivial variety such that the
  finitely generated algebras in $\mathcal V$ are free.
  If\/ $\mathcal V$ 
  has at least one $0$-ary function symbol, then $\mathcal V$
  is definitionally equivalent to either the variety of
  pointed sets or a variety of vector spaces over a division ring.  
\end{thm}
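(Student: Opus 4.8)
The plan is to mirror the architecture of the proof of Theorem~\ref{idempotent}: pass to a minimal subvariety $\mathcal M\subseteq\mathcal V$, classify $\mathcal M$, and then bootstrap to $\mathcal V=\mathcal M$. Every nontrivial variety has a finitely generated simple member (Magari's Theorem), so $\mathcal M$ does; since every finitely generated member of $\mathcal V$ is some $\m f_{\mathcal V}(m)$ and Lemma~\ref{3coatoms} shows that $\m f_{\mathcal V}(m)$ is simple only when $m=1$, the algebra $\m f_{\mathcal V}(1)$ is the unique finitely generated simple member of $\mathcal V$ up to isomorphism. Hence $\mathcal M=V(\m f_{\mathcal V}(1))$. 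Exactly as in Theorem~\ref{idempotent}, the free property descends from $\mathcal V$ to $\mathcal M$ and $\m f_{\mathcal M}(1)=\m f_{\mathcal V}(1)=:\m f$, which is therefore simple (Lemma~\ref{3coatoms}), abelian (Lemma~\ref{abelian}), and has injective nonconstant unary polynomial operations (Lemma~\ref{injective}).

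Next I would verify that $\mathcal M$ is abelian, so that Theorem~\ref{affine} applies. Because $\mathcal M=V(\m f)$, each free algebra $\m f_{\mathcal M}(n)$ embeds, via evaluation at all assignments of the generators into $F$, into the power $\m f^{F^n}$; thus $\m f_{\mathcal M}(n)\in\Su\Pd(\m f)$. Since the term condition is inherited by subalgebras and by products (though not, in general, by homomorphic images, which is why one works with this embedding rather than with $\Ho$), every $\m f_{\mathcal M}(n)$ is abelian and hence $\mathcal M$ is an abelian variety. Theorem~\ref{affine} then splits the argument according to whether $\mathcal M$ is affine. In the affine case, since the constant $0$ names a one-element subuniverse of every member, the Maltsev term of an affine variety together with this fixed zero yields group operations centred at $0$, so $\mathcal M$ is term equivalent to a variety of modules over a ring $R$. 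Here $\m f=\m f_{\mathcal M}(1)$ is the regular module ${}_RR$, whose congruences are the left ideals of $R$; simplicity of $\m f$ forces $R$ to have no proper nonzero left ideal, hence $R$ is a division ring and $\mathcal M$ is definitionally equivalent to a variety of vector spaces over a division ring.

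If $\mathcal M$ is not affine, Theorem~\ref{affine} produces an affine obstruction $\m s\in\mathcal M$, endowed by Lemma~\ref{nonaffine} with Property~P and, after a suitable quotient, a compatible partial order with least element $0$. The goal is to show that every term operation of $\mathcal M$ is equivalent to a variable or to $0$, i.e.\ that $\m f_{\mathcal M}(n)=\{0,x_1,\dots,x_n\}$ for all $n$, which is exactly the clone of pointed sets. The injectivity of nonconstant unary polynomials (Lemma~\ref{injective}) forbids any term operation that duplicates a coordinate, the feature that separates pointed sets from their matrix powers of dimension at least two; combined with Property~P, the compatible order, the simplicity of $\m f$, and the variable-independence statement of Lemma~\ref{nonaffine}(4), this is used to exclude any nontrivial dependence of a term on its variables. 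Granting this, $\mathcal M$ is definitionally equivalent to the variety of pointed sets.

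Finally, the minimal-generating-set cardinality argument of Theorem~\ref{idempotent} gives $\mathcal V=\mathcal M$: were there a finitely generated $\m f_{\mathcal V}(m)\notin\mathcal M$, the algebra $\m f_{\mathcal M}(m)\in\mathcal M\subseteq\mathcal V$ would be free in $\mathcal V$, forcing $\m f_{\mathcal M}(m)\cong\m f_{\mathcal M}(n)$ with $m\neq n$, which is impossible both in pointed sets and in vector spaces. I expect the non-affine case to be the main obstacle: without local finiteness one cannot pass to a \emph{finite} strongly abelian algebra and invoke the classification of minimal abelian varieties, so the pointed-set structure must be extracted directly from the affine-obstruction data of Section~2 together with Lemma~\ref{injective}, rather than by citing that classification.
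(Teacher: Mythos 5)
Your overall architecture matches the paper's exactly: pass to a minimal subvariety $\mathcal M$ generated by the unique finitely generated simple algebra $\m f_{\mathcal V}(1)$, show $\mathcal M$ is abelian, split along the dichotomy of Theorem~\ref{affine}, run the left-ideal/division-ring argument in the affine case, and finish with the minimal-generating-set cardinality argument for $\mathcal V=\mathcal M$. But the non-affine case --- which you yourself flag as the main obstacle --- is precisely where your proposal stops being a proof: the sentence ending ``this is used to exclude any nontrivial dependence of a term on its variables,'' followed by ``Granting this,'' asserts the crucial step without an argument. Two concrete pieces are missing. First, Lemma~\ref{nonaffine} gives Property~P only for the obstruction $\m s$, not for $\m f_{\mathcal M}(1)$; the paper transfers it by noting that any nontrivial subalgebra of an affine obstruction containing $0$ is again an affine obstruction, and that by Lemma~\ref{freely} every nonzero element of $\m s$ freely generates a copy of $\m f_{\mathcal M}(1)$, so $\m f_{\mathcal M}(1)$ itself has Property~P. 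Second, the paper then proves the sharp claim $|\m f_{\mathcal M}(1)|=2$: if $a\neq b$ are both nonzero, simplicity yields (via a Mal'cev chain for $\cg(a,b)$) a unary polynomial $p$ with $p(a)=0\neq p(b)$ or vice versa; Property~P forces $p(0)=0$, so $p$ is neither constant nor injective, contradicting Lemma~\ref{injective}. A two-element abelian nonaffine algebra with a constant-named singleton subuniverse is, up to definitional equivalence, the pointed set (Post's classification, or a direct check), and since $\mathcal M$ is generated by it, $\mathcal M$ is the variety of pointed sets. Your stated target --- every term operation of $\mathcal M$ is a variable or $0$ --- is a consequence of this, not something your listed ingredients visibly deliver: as written you never obtain a two-element bound, never combine simplicity, Property~P and injectivity into an actual contradiction, and your remark that injectivity ``forbids term operations that duplicate a coordinate'' addresses a matrix-power phenomenon that plays no role in the paper's argument here.

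A smaller soft spot: your embedding $\m f_{\mathcal M}(n)\hookrightarrow \m f^{F^n}$ shows the finitely generated free algebras are abelian, but the jump to ``$\mathcal M$ is an abelian variety'' still needs justification, since arbitrary members are homomorphic images of (possibly infinitely generated) free algebras and, as you note, the term condition does not pass through $\Ho$. The repair is one line --- any failure of the term condition is witnessed in a finitely generated subalgebra, which is itself free in $\mathcal M$ (the free-property descends to $\mathcal M$), hence abelian --- while the paper instead writes $\mathcal M=\Su\Pd\Pd_U(\m f_{\mathcal M}(1))$ and uses preservation of abelianness under $\Su$, $\Pd$ and ultraproducts. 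In the affine case your Maltsev-term sketch is the standard argument, though the paper simply cites Lemma~4.3 of Szendrei's paper on quasilinear functions, which packages exactly the passage from polynomial equivalence to definitional equivalence with a module variety using the unique constant and the uniqueness of singleton subuniverses; these two background facts are doing real work there and should be invoked explicitly.
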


\begin{proof}
  Let $\mathcal M$ be a minimal subvariety of $\mathcal V$.
  By the same argument we used in Theorem~\ref{idempotent},
  $\mathcal M$ also has the property that its finitely generated
  algebras are free. We first prove the theorem for $\mathcal M$,
  then lift the result to $\mathcal V$, as we did in Theorem~\ref{idempotent}.

  All the lemmas proved for $\mathcal V$ in this subsection
  hold for $\mathcal M$. In particular, 
\begin{enumerate}
\item[(i)]
$\mathcal M$ has only one $0$-ary function symbol, up to equivalence,
which we denote by $0$;
\item[(ii)]
$\{0\}$ is the unique $1$-element subalgebra in every member of $\mathcal M$, 
and
\item[(iii)]
the unique finitely generated
  simple algebra in $\mathcal M$,
up to isomorphism, is $\m f_{\mathcal M}(1) = \m f_{\mathcal V}(1)$.
\end{enumerate}
  By the minimality of $\mathcal M$,
  ${\mathcal M}=\Ho\Su\Pd(\m f_{\mathcal M}(1))$, and the free algebras
  of $\mathcal M$ therefore lie in $\Su\Pd(\m f_{\mathcal M}(1))$. This
  latter class contains all the free algebras of $\mathcal M$,
  hence contains all of the finitely generated members of $\mathcal M$,
  hence generates $\mathcal M$ as a universal class:
\begin{equation}\label{Mvariety}
  {\mathcal M} =
  \Su\Pd_U(\Su\Pd(\m f_{\mathcal M}(1)))=\Su\Pd\Pd_U(\m f_{\mathcal M}(1)).
  \end{equation}
  By Lemma~\ref{abelian},
  $\m f_{\mathcal M}(1)$ is abelian, hence from (\ref{Mvariety})
  we deduce that $\mathcal M$ is an abelian variety.

  As a first case, assume that $\mathcal M$ is affine. 
 It follows from facts (i) and (ii) above and Lemma~4.3 of 
 \cite{szendrei-qlin} that 
 $\mathcal M$ is definitionally equivalent to a variety of
  left $R$-modules for some ring $R$. One realization of 
  $\m f_{\mathcal M}(1)$ has universe $R$, generator $1$, and term operations
  of the form
  \[
r_1x_1+\cdots + r_hx_h,\quad r_i\in R.
\]
Each left ideal of $R$ induces a congruence on this algebra.
Since $\m f_{\mathcal M}(1)$ is simple, $R$~can have no nontrivial
proper left ideals, hence $R$ must be a division ring.

For the remaining case we may assume, from Theorem~\ref{affine},
that $\mathcal M$ has
an affine obstruction
$\m s$
(see Definition~\ref{affine_obstruction}).
The element of $S$
referred to as
$0$ in Definition~\ref{affine_obstruction}
is a singleton subuniverse of $\m S$, therefore fact (ii)
ensures that it
must be the element named by our constant symbol $0$.
It is easy to see that any nontrivial
subalgebra of an affine obstruction
$\m S$
which contains $0$ is again an affine obstruction
(i.e., inherits properties (1)--(4)
of Lemma~\ref{nonaffine}). Since
we know from Lemma~\ref{freely} that
every nontrivial $1$-generated 
subalgebra of $\m S$
is isomorphic
to $\m f_{\mathcal M}(1)$, 
we conclude that $\m f_{\mathcal M}(1)$ has Property~P.

\begin{clm}\label{claim}
$\m f_{\mathcal M}(1)$ has size $2$.
  \end{clm}

\cproof
Assume otherwise that there are distinct nonzero
elements 
$a, b$ in  $F_{\mathcal M}(1)$.
 The congruence
$\cg(a,b)$ is 
nontrivial,
hence by the simplicity of
$\m f_{\mathcal M}(1)$
there is a unary polynomial $p(x)$
of $\m f_{\mathcal M}(1)$ such that $p(a)=0\neq p(b)$, or the same
with $a$ and $b$ interchanged. But $p(a)=0$ implies
$p(0)=0$, by Property~P, showing that $(a,0)$
is a nontrivial pair in $\ker(p)$. On the other hand
$(a,b)$ is a pair not in $\ker(p)$. This contradicts
Lemma~\ref{injective}, which establishes that unary polynomials
of $\m f_{\mathcal M}(1)$ are constant or injective.
  \cqed

  \bigskip
  
  Claim~\ref{claim}, together with earlier information, yields that
  $\m f_{\mathcal M}(1)$ is a 2-element, nonaffine, abelian
  algebra with a singleton subalgebra named by a constant.
  There is one such algebra up to definitional 
  equivalence,
  namely the 2-element pointed set. (The simplest way to affirm this
  is to refer to Post's classification of 2-element algebras,
  but one doesn't need a result of such depth to make this conclusion.)

  Since $\mathcal M$ is generated by $\m f_{\mathcal M}(1)$, which
  is equivalent to a pointed set, it follows that ${\mathcal M}$
  is definitionally equivalent to the variety of pointed sets
  in the case we are considering.

  We have shown that $\mathcal M$ is definitionally equivalent
  to a variety of vector spaces over a division ring or the variety
  of pointed sets. 
  We now argue that ${\mathcal V}={\mathcal M}$ using the same type
  of argument used in Theorem~\ref{idempotent}.

  If ${\mathcal V}\neq {\mathcal M}$,
  there is a finitely generated
  algebra in $\mathcal V\setminus\mathcal M$,
  which we may assume is $\m a:=\m f_{\mathcal V}(m)$.
  Then
 $\m a$ has an $m$-element generating set
  that is minimal under inclusion as a generating set.
  Let $\m b$ be the $m$-generated free algebra in
  $\mathcal M$.
The algebra 
  $\m b$ also has an $m$-element minimal generating set.
But
  $\m b\in{\mathcal M}$, so $\m b\in {\mathcal V}$, and
  $\m b$ cannot be isomorphic to $\m a$,
  so $\m b \cong \m f_{\mathcal V}(n)$
  for some $n\neq m$. This implies that $\m b$
  has an $n$-element minimal generating set as well as an
  $m$-element minimal generating set.
  But there does not exist a vector space nor a pointed set
  that has minimal generating sets of different cardinalities.
  We conclude that $\mathcal V = \mathcal M$.  
\end{proof}

\section{Discussion}

  Throughout this paper our arguments depended on some
  strong but odd assumptions, namely that a
  $1$-element $\mathcal V$-algebra
  is free and that a finitely generated simple
  $\mathcal V$-algebra is free. One might wonder whether
  anything can be proved for varieties where only
  the ``large'' finitely generated algebras are assumed to be free.
  Specifically, one might ask what can be said about the
  varieties $\mathcal V$ satisfying the following property:
  There exists a natural number $k$ such that every finitely generated
  algebra in $\mathcal V$ is either free or can be generated by $\leq k$
  elements.

  Unfortunately there is a seemingly-unclassifiable
  collection of varieties for which
  $\m f_{\mathcal V}(j)\cong\m f_{\mathcal V}(k)$ for some $j<k$.
  For any given $j<k$ the varieties with this property
  represent a filter in the lattice of interpretability types.
  In such varieties every finitely generated algebra can be generated
  by $\leq k$ elements, so the conditions of the question are satisfied.
  This suggests that there is no nice classification of
  the varieties $\mathcal V$ satisfying the property above.

  However, if we restrict our attention to locally finite varieties,
  then we can prove the following.

  \begin{thm}
    Let $\mathcal V$ be a nontrivial locally finite variety.
    If there exists a natural number $k$ such that every finitely generated
  algebra $\mathcal V$ is either free or can be generated by $\leq k$
  elements, then every nonsingleton algebra in $\mathcal V$ is free.
  In fact, $\mathcal V$ is definitionally equivalent to
\begin{enumerate}
\item the variety of sets,
\item the variety of pointed sets,
\item a variety of vector spaces over a finite field, or
\item a variety of affine spaces over a finite field.
\end{enumerate}
    \end{thm}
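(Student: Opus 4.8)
The plan is to reduce the statement to the two classification theorems already proved---Theorem~\ref{idempotent} for varieties without constants and its companion in Subsection~\ref{subsection2} for varieties with a constant---by showing that the hypotheses force \emph{every} finitely generated member of $\mathcal V$ to be free. Granting this, those theorems put $\mathcal V$, up to definitional equivalence, among sets, pointed sets, vector spaces over a division ring $D$, and affine spaces over $D$; local finiteness then forces $D$ to be finite, because a realization of $\m f_{\mathcal V}(1)$ has universe $D$ and must be finite. Each of the four resulting varieties has \emph{all} of its algebras free, which yields both the enumerated list and the conclusion that every nonsingleton member is free. So the real work is the implication ``large rank free, locally finite $\Rightarrow$ all finitely generated members free.''

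First I would record what local finiteness buys. Since a free generator of $\m f_{\mathcal V}(n+1)$ cannot lie in the subalgebra generated by the remaining generators---else an identity $x_{n+1}=t(x_1,\dots,x_n)$ would collapse $\mathcal V$ to a point---the inclusions $\m f_{\mathcal V}(0)\hookrightarrow\m f_{\mathcal V}(1)\hookrightarrow\cdots$ are proper, and finiteness turns this into $|F_{\mathcal V}(n)|<|F_{\mathcal V}(n+1)|$; hence $d(\m f_{\mathcal V}(n))=n$, where $d(\cdot)$ is the least size of a generating set. Thus any finitely generated algebra needing more than $k$ generators is already free, and the only possible non-free members are $(\le k)$-generated; of these there are just finitely many up to isomorphism, since each is a quotient of one of the finite algebras $\m f_{\mathcal V}(1),\dots,\m f_{\mathcal V}(k)$. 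The device for promoting a small algebra to a free one is a power trick: for any finite $\m a$ with $|A|\ge 2$, the $n$ coordinate projections $\m a^n\to\m a$ are pairwise distinct, while a homomorphism out of a $d(\m a^n)$-generated algebra into $\m a$ is fixed by the images of $d(\m a^n)$ generators, so $n\le |A|^{d(\m a^n)}$ and therefore $d(\m a^n)\ge\log_{|A|}n\to\infty$. Consequently every power $\m a^n$ with $n$ large is free.

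The decisive and hardest step is the converse passage---\emph{cancellation}: from ``$\m a\times(\text{free})$ is free'' (equivalently ``$\m a^n$ is free for large $n$'') conclude that $\m a$ itself is free. This is exactly where local finiteness is indispensable, since without it the theorem fails. I would run a minimal-counterexample argument: assuming not every finitely generated member is free, choose a non-free $\m a$ of least cardinality, which is legitimate because the candidate list is finite. If $\m a$ is directly decomposable, its factors are strictly smaller, hence free by minimality, reducing the problem to showing that a product of two free algebras is free; if $\m a$ is directly indecomposable, I would analyze the factorization of the free algebra $\m a^n\cong\m f_{\mathcal V}(M_n)$ into indecomposables. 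To make either horn bite, I expect to first establish that $\mathcal V$ is abelian---feeding the free powers $\m a^n$ into the arguments of Lemma~\ref{abelian} and Theorem~\ref{affine}, and using that a Magari simple member, once promoted to a free algebra, must be $\m f_{\mathcal V}(1)$ or $\m f_{\mathcal V}(2)$---so that the commutator theory supplies the cancellation (or Krull--Schmidt-type uniqueness of factorization) needed to contradict the non-freeness of $\m a$. The main obstacle, then, is precisely this cancellation: turning ``a free power'' into ``a free base'' without yet knowing the global structure of $\mathcal V$, and it is the step on which local finiteness and the finiteness of the list of small algebras are brought decisively to bear.
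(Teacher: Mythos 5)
Your opening reduction is false, and the failure is exactly the one flagged by the paper's own \emph{caveat}. The hypotheses do \emph{not} force every finitely generated member of $\mathcal V$ to be free: take the variety of pointed sets in which the base point is named by a \emph{unary} constant term operation $c(x)$ rather than a nullary symbol (or vector spaces over a finite field with $0$ available only as $x-x$). There every pointed set with at least two elements is free on its non-point elements, so every finitely generated algebra is free or $1$-generated (the hypothesis holds with $k=1$), yet the one-element algebra is \emph{not} free, since $\m f_{\mathcal V}(\emptyset)$ is empty and $\m f_{\mathcal V}(1)$ has two elements. Such a variety is not definitionally equivalent, in the strict sense of Theorem~\ref{idempotent} and its companion in \S\ref{subsection2} (where constants are nullary), to any of the four varieties those theorems deliver; that is precisely why the present theorem's conclusion is ``every \emph{nonsingleton} algebra is free'' together with the caveat allowing unary constants. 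So you cannot reach the conclusion by quoting the earlier classification theorems, which assume \emph{all} finitely generated members --- including the singleton --- are free. Your own power trick betrays the obstruction: it requires $|A|\geq 2$, and for the one-element algebra $\m e$ one has $\m e^n=\m e$ for all $n$, so your minimal counterexample can simply \emph{be} $\m e$, where the argument stalls permanently.

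Independently of the caveat, the step you yourself identify as decisive --- cancellation, passing from ``$\m a^n$ is free for large $n$'' to ``$\m a$ is free'' --- is never proved, only hoped for, and the scaffolding around it does not hold: powers of a Magari simple algebra are not simple, so ``promoting'' a simple member to a free algebra destroys exactly the property you need; a product of two free algebras is not free in a general variety, so the decomposable horn of your induction is as hard as the original problem; and Lemma~\ref{abelian} was proved under the hypothesis that all finitely generated members are free (plus a nullary symbol), so it does not transfer, making your route to abelianness circular. The paper takes a genuinely different path that avoids cancellation altogether: first it shows $\mathcal V$ is itself minimal (the free algebras of a minimal subvariety, once they need more than $k$ generators, are free in $\mathcal V$ and cofinal among the $\m f_{\mathcal V}(q)$); then local finiteness gives only finitely many, say $C$, isomorphism types of non-free finitely generated algebras, so the $G$-spectrum satisfies $G_{\mathcal V}(n)=n+C$ for large $n$; Theorem 8.15 of Idziak--McKenzie--Valeriote then yields that $\mathcal V$ is abelian; the classification of locally finite minimal abelian varieties reduces to matrix powers of (pointed) sets or affine reducts of finite simple modules; and finally a count of $|{\rm Clo}_n(\m a)|$ against the sizes of finite algebras in $\mathcal V$ forces matrix-power degree $d=1$ and dimension $1$. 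To salvage your outline you would need, at minimum, to weaken your reduction to ``every nonsingleton finitely generated algebra is free,'' reprove the classification theorems under that weaker hypothesis in signatures without nullary symbols, and actually supply the cancellation step --- none of which is routine.
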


  \emph{Caveat:} While in the earlier part of the paper
  our ``pointed sets'' and ``vector spaces'' each had a
  (unique) $0$-ary term operation, in this theorem we
  allow the constants of the algebras in cases (2) and (3)
  to be constant $0$-ary term operations \emph{or} constant
  $1$-ary term operations.
  If these constants are $1$-ary term operations and
  there are no constant $0$-ary term operations, then no
  1-element algebra of the variety
  is free, but all the other algebras are free.
  
  \begin{proof}
    First observe that any variety satisfying the hypotheses
    of the theorem
    must be a minimal variety. For if ${\mathcal M}$
    is a minimal subvariety of ${\mathcal V}$, then the sequence
    $(\m f_{\mathcal M}(p))_{p\in\omega}$ consists of algebras
    in $\mathcal V$ whose sizes increase with $p$, and which require
    more generators as $p$ increases. It follows
    from the hypotheses of the theorem that some tail
    end of this sequence is cofinal in the sequence
    $(\m f_{\mathcal V}(q))_{q\in\omega}$. Hence the algebras
    in the first sequence 
    generate the same variety as the algebras in the second sequence,
    i.e. $\mathcal M = \mathcal V$.
        
    By local finiteness, the hypotheses on $\mathcal V$ ensure that
    there are at most finitely many (say $C$) isomorphism types
    of finitely generated non-free algebras in $\mathcal V$.
    Local finiteness ensures that $\m f_{\mathcal V}(n)$ cannot be
    $m$-generated if $m<n$. Hence if $n\geq C$, it follows that
    there are $n+1$ free algebras that can be generated by $\leq n$
    elements ($\m f_{\mathcal V}(0),\ldots,\m f_{\mathcal V}(n)$)
    and $C$ non-free algebras that can be generated by
    $\leq n$ elements, hence a total of $n+C$ algebras in $\mathcal V$
    that can be generated by $\leq n$ elements. This says
    precisely that the \emph{$G$-spectrum} of $\mathcal V$
    satisfies $G_{\mathcal V}(n) = n+C$ whenever $n\geq C$.
    (The $G$-spectrum of a locally finite variety
    $\mathcal V$ is the function whose value at $n$ is
    the number of isomorphism types of algebras in $\mathcal V$
    that can be generated by $\leq n$ elements.)

    It is known that a locally finite variety $\mathcal V$ whose
    $G$-spectrum $G_{\mathcal V}(n)$ is bounded above
    by a polynomial function of $n$ must be abelian
    (\cite{idziak-mckenzie-valeriote}, Theorem 8.15).
    So at this point we know that our variety $\mathcal V$
    is a locally finite, minimal, abelian variety.
    These have been classified in \cite{kkv1,szendrei,szendrei2}.
    Such varieties are definitionally equivalent to either
    a matrix power of the variety of sets,
    a matrix power of a variety of pointed sets (note the caveat
    between the theorem statement and the start of the proof),
    or to an affine variety over a finite simple ring where each member
    has a     singleton subuniverse.
    We will complete the proof of the theorem by examining the clones
    of such algebras.

Let $\m s$ be a strictly 
simple generator of our locally finite, minimal, abelian variety
$\mathcal V$.
It follows from the results in \cite{kkv1,szendrei,szendrei2}
that $\m s$ is isomorphic to an algebra that is term equivalent to 
(i.e., has the same underlying set and the same
non-nullary term operations as) one of the 
following algebras:
\begin{enumerate}
\item[(i)]
$\m a=({\bf 2};\emptyset)^{[d]}$ ($d\ge1$),
the $d$-th matrix power of the
$2$-element set ${\bf 2}=\{0,1\}$;
\item[(ii)]
$\m a=({\bf 2};0)^{[d]}$ ($d\ge1$),
(the $d$-th matrix power of the 
$2$-element pointed set $({\bf 2};0)$;
\item[(iii)]
an affine reduct $\m a$ of a finite simple module $\m m$ such that
$\m a$ has the same ring as $\m m$.  
\end{enumerate}
In each one of these cases, the fact that $\m s$ generates
$\mathcal V$ implies that
\[
|\m f_{\mathcal V}(n)|=|{\rm Clo}_n(\m a)|\quad \text{for every $n\ge1$},
\]
where ${\rm Clo}_n(\m a)$ denotes the set of $n$-ary term operations of $\m a$
(the $n$-ary sort of the clone of $\m a$).
Thus, if $\mathcal V$ satisfies the assumptions of the theorem, then
the (increasing) sequence of all sizes of finite algebras in $\mathcal V$
must have the same tail end as the sequence 
$(|{\rm Clo}_n(\m a)|)_{0<n<\omega}$.
To finish the proof of the theorem, we have to deduce from this condition
that
\begin{itemize}
\item
$d=1$ in cases (i) and (ii), and
\item
$\m a$ is a $1$-dimensional vector space or affine space over a finite field
in case (iii).
\end{itemize}

{\bf Cases (i)--(ii).}
Every operation $f\in{\rm Clo}_n(\m a)$ has the form
\begin{multline*}
f\colon({\bf 2}^d)^n\to{\bf 2}^d,\\
\bigl((x_{0,0},\dots,x_{0,d-1}),\dots,(x_{n-1,0},\dots,x_{n-1,d-1})\bigr)
\mapsto \bigl(f_0(x_{i_0,j_0}),\dots,f_{d-1}(x_{i_{d-1},j_{d-1}})\bigr) 
\end{multline*}
where, for each $\ell$, either $f_{\ell}=\id$ and $(i_\ell,j_{\ell})$
is a pair of integers with $0\le i_\ell<n$, $0\le j_\ell<d$, or
we are in case (ii) and $f_\ell$ is the (unary) constant operation with
value $0$ and the pair $(i_\ell,j_{\ell})$ is irrelevant.
It is easy to check that different choices yield different operations.
Hence $|{\rm Clo}_n(\m a)|=(nd)^d$ in case (i) and
$|{\rm Clo}_n(\m a)|=(nd+1)^d$ in case (ii).

For every finite set $B$ with $0\in B$, 
the algebra 
$(B;\emptyset)^{[d]}$ belongs to the variety generated by 
$({\bf 2};\emptyset)^{[d]}$, and
the algebra 
$(B;0)^{[d]}$ belongs to the variety generated by 
$({\bf 2};0)^{[d]}$.
Hence, ${\mathcal V}$ contains algebras of sizes $m^d$ for every
$m\ge 1$. Since our assumptions force that
the (increasing) sequence of all sizes of finite algebras in $\mathcal V$
has the same tail end as the sequence $(|{\rm Clo}_n(\m a)|)_{0<n<\omega}$,
we get that
a tail end of the sequence $(m^d)_{0<m<\omega}$ must be a subsequence
of a tail end of the sequence $\bigl((nd)^d\bigr)_{0<n<\omega}$ or 
$\bigl((nd+1)^d\bigr)_{0<n<\omega}$, according to whether we are in case 
(i) or (ii).
It is easy to see that in both cases this will hold only if $d=1$.

{\bf Case (iii).}
Let $\m a$ be an affine reduct of a finite, simple $R$-module 
such that the ring of $\m a$ is also $R$.
Since we are only interested in the term operations of $\m a$, we
may assume without loss of generality that $R$ and $\m m$ are unital
and $\m m$ is a faithful $R$-module.
Since $\m m$ is finite and simple, it follows that there
exist a finite field $K$ and a positive integer $d$ such that $R$ is
the ring of $d\times d$ matrices with entries in $K$, and
$\m m$ is a $d$-dimensional $K$-vector space with the usual action
of $R$ as an $R$-module.

Since $\m a$ is an affine reduct of $\m m$ with the same ring $R$ as $\m m$,
Lemma~4.3 of \cite{szendrei} implies that 
there exists a left ideal $L$ of $R$ such that
\begin{equation}\label{affine_clone}
{\rm Clo}_n(\m a)=\left\{\sum_{i=0}^{n-1}r_ix_i: r_0,\dots,r_{n-1}\in R\ \ 
\text{and}\ \ 1-\sum_{i=0}^{n-1}r_i\in L\right\}
\quad
\text{for all $n\ge1$.}
\end{equation}
Thus, $|{\rm Clo}_n(\m a)|=|R|^{n-1}|L|=|M|^{d(n-1)}|L|=|A|^{d(n-1)}|L|$ 
for all $n\ge1$.
The variety ${\mathcal V}$ contains finite algebras of sizes
$|A|^m=|S|^m$ for every $m\ge1$. Now, if $d>1$, then
no tail end of the sequence
$(|A|^m)_{0<m<\omega}$ is a subsequence of any tail end of the sequence
$(|{\rm Clo}_n(\m a)|)_{0<n<\omega}=(|A|^{d(n-1)}|L|)_{0<n<\omega}$. Therefore
we conclude the same way as before that $d=1$.
This implies that $R=K$ and 
$\m m$ is a $1$-dimensional $K$-vector space.
Hence, either $L=K$ or $L=\{0\}$, which implies by \eqref{affine_clone}
that $\m a$ is term equivalent to either the vector space $\m m$, or
the corresponding affine space (i.e., the full idempotent reduct
of $\m m$).  
  \end{proof}

  Now we turn to the opposite type of question:
  what can one say about the varieties for which
  there is a natural number $n$ such that every
  $(\leq n)$-generated algebra is free? If $n$ is large enough,
  must all algebras in the variety be free? We show that the answer
  to this is negative for any natural number $n$.

\begin{thm}\label{smallfree}
  For any natural number~$n$ there exists a variety with the
  property that every $(\leq n)$-generated algebra is free,
  but some $(n+1)$-generated algebra in the variety is not
  free.
\end{thm}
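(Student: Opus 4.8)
The plan is to build, for each $n$, an explicit \emph{pointed} variety $\mathcal{V}_n$ whose only operations are a constant $0$ and a single $(n+1)$-ary symbol $g$, governed by two families of identities:
(a) $g$ evaluates to $0$ on every argument tuple having two equal coordinates (for each pair $i<j$, the identity $g(z_1,\dots,u,\dots,u,\dots,z_{n+1})=0$ with $u$ in positions $i$ and $j$), and
(b) $g$ evaluates to $0$ on every tuple one of whose coordinates is $0$ (for each position, $g(z_1,\dots,0,\dots,z_{n+1})=0$).
The guiding principle is a pigeonhole phenomenon: no tuple of $n+1$ \emph{distinct nonzero} elements can be assembled from $\le n$ generators, so $g$ is forced to behave as the constant $0$ on every algebra generated by at most $n$ elements, while at rank $n+1$ the term $g(x_1,\dots,x_{n+1})$ first becomes nonzero and can be exploited to manufacture relations.

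First I would pin down the small free algebras. For $k\le n$ I claim $\m f_{\mathcal{V}_n}(k)$ is just the $(k{+}1)$-element pointed set $\{0,x_1,\dots,x_k\}$ on which $g$ acts identically as $0$. Indeed this set, with $0$ as constant and $g\equiv 0$, is a subuniverse containing the generators: any $(n+1)$-tuple of its members either contains $0$, whence (b) applies, or consists of the $k\le n$ nonzero elements $x_1,\dots,x_k$, forcing a repeated coordinate and hence (a); in either case the value is $0$. As $g$ is constant on this algebra, every equivalence relation is a congruence, so each quotient is again a pointed set with trivial $g$ and is therefore isomorphic to some $\m f_{\mathcal{V}_n}(j)$ with $j\le k$; thus it is free. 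Since every $(\le n)$-generated member of $\mathcal{V}_n$ is a quotient of some $\m f_{\mathcal{V}_n}(k)$ with $k\le n$, all $(\le n)$-generated algebras are free.

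Next I would exhibit a non-free $(n+1)$-generated algebra, observing along the way the sharp contrast that $\m f_{\mathcal{V}_n}(n+1)$ is \emph{infinite} (iterating $g$ on tuples that mix generators with previously produced $g$-values keeps yielding distinct nonzero elements). Let $\m b$ have universe $\{0,b_1,\dots,b_{n+1},c\}$, constant $0$, with $g$ returning $c$ on every ordering of the full set $\{b_1,\dots,b_{n+1}\}$ and $0$ on all other tuples. One checks at once that $\m b$ satisfies (a) and (b), so $\m b\in\mathcal{V}_n$; that $c=g(b_1,\dots,b_{n+1})$ is generated while no $b_i$ is ever produced by $g$, so $\{b_1,\dots,b_{n+1}\}$ is the unique minimal generating set of $\m b$; and hence that $\m b$ is $(n+1)$-generated. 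The assignment $x_i\mapsto b_i$ is a surjection $\m f_{\mathcal{V}_n}(n+1)\to\m b$ from an infinite algebra onto a finite one, so it is a proper quotient map. Were $\m b$ free, it would be free on some minimal generating set, which can only be $\{b_1,\dots,b_{n+1}\}$; that would force this surjection to be an isomorphism, a contradiction. Hence $\m b$ is a non-free $(n+1)$-generated member of $\mathcal{V}_n$, completing the construction.

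The step I expect to be the crux is certifying the non-collapse claims in the free algebra — that $\m f_{\mathcal{V}_n}(n+1)$ is genuinely infinite (equivalently, that the iterated $g$-terms are distinct and nonzero), and, more basically, that the equational consequences of (a) and (b) are \emph{exactly} the evident collapses to $0$, with no hidden identification forced among nonzero $g$-values. The reliable way to secure this is to produce one explicit model of $\mathcal{V}_n$ that witnesses all the needed distinctness: take the absolutely free term algebra, keep a distinct symbol for each application of $g$ to an injective tuple of distinct nonzero elements, and set every remaining $g$-value to $0$; verifying that this interpretation satisfies (a) and (b) then certifies both that the small free algebras are as described and that $x_i\mapsto b_i$ is not injective. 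Once this model is in hand, the remaining verifications are routine manipulations with the two identities.
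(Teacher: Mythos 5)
Your proof is correct, and although it runs on the same underlying pigeonhole engine as the paper's, it is a genuinely different instantiation of it, so a comparison is worthwhile. The paper adjoins to a base variety (sets, or vector spaces over the two-element field) an $(n+1)$-ary \emph{semiprojection} $s$, whose defining identities force $s$ to degenerate to \emph{first projection} on any algebra generated by at most $n$ elements; your identities (a) and (b) instead force $g$ to degenerate to the \emph{constant} $0$, inside a bespoke pointed variety. The real divergence is at the non-freeness step: the paper never analyzes $\m f_{{\mathcal V}_s}(n+1)$ at all --- it takes the $(n+1)$-element algebra $\m b$ on which $s$ is projection and notes that $\m b$ cannot be free because an $(n+1)$-element algebra carrying a non-projection semiprojection is $(n+1)$-generated yet is not a homomorphic image of $\m b$; this is a two-line comparison of finite algebras. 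You instead show that the rank-$(n+1)$ free algebra is infinite and exhibit a finite proper quotient whose unique minimal generating set has $n+1$ elements. Your route therefore carries an extra obligation, which you correctly identify as the crux: certifying that (a) and (b) entail no hidden collapses among nonzero $g$-values. Your proposed normal-form model (formal $g$-terms on injective tuples of nonzero normal terms, with every degenerate application sent to $0$) does satisfy (a) and (b) and does witness both the description of the small free algebras and the infinitude of $\m f_{{\mathcal V}_n}(n+1)$, so the gap is fillable exactly as you sketch. One small step deserves an explicit line: that a free generating set of your $\m b$ must equal $\{b_1,\dots,b_{n+1}\}$; this follows from the standard fact that a free generating set of a nontrivial algebra is a minimal generating set, or directly, since $0$ is fixed by every endomorphism and any endomorphism collapsing two $b_i$'s sends $c$ to $0$. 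In sum: the paper's construction is lighter (no free-algebra analysis needed, and it works uniformly over different base varieties), while yours buys an explicit description of every free algebra of rank at most $n$ and a concrete finite non-free witness, at the cost of a term-model consistency argument.
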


\begin{proof}
An $(m+1)$-ary (first variable) \emph{semiprojection} 
  on a set $A$ is an 
$(m+1)$-ary operation 
$s(x_0,x_1,\ldots,x_m)$
  on $A$ such that for any $\wec{a}\in A^{m+1}$ we have
  \[
s(a_0,a_1,\ldots,a_m) = a_0
\]
whenever $a_i=a_j$ for some $i\neq j$. 
This property can be
expressed by identities, so
starting with any variety $\mathcal V$ we can add an
$(m+1)$-ary function symbol $s$ to the language and define
${\mathcal V}_s$ to be the variety of all
$\mathcal V$-algebras expanded by an $(m+1)$-ary 
(first variable)
semiprojection.

The added semiprojection operation acts like first projection
on any algebra in~${\mathcal V}_s$ that has cardinality
at most $m$. Hence any algebra of size at most $m$
in ${\mathcal V}_s$ is definitionally equivalent to an algebra
in $\mathcal V$.

If $\mathcal V$ is the variety of sets, then this construction with $m=n$
yields a variety~${\mathcal V}_s$ in which
every algebra that is generated by at most $n$ elements
will be definitionally equivalent to a set, hence will
be free.
 Now let $\m B$ be the $(n+1)$-element algebra in~${\mathcal
   V}_s$ where $s$ interprets as a first projection, so $\m
 B$ is definitionally equivalent to a set. This algebra is
 not free, because there exist $(n+1)$-generated algebras
 in~$\mathcal V_s$ that are not homomorphic images of $\m B$.
For example, any $(n+1)$-element
algebra $\m a$ in~${\mathcal V}_s$ where
$s$ is a (first variable) semiprojection 
other than a projection has this property.

Similarly, if $\mathcal V$ is the variety of vector spaces over
the $2$-element field, and we let $m=2^n$, then
the $(2^n+1)$-ary semiprojection $s$
acts like first projection
on any algebra in ${\mathcal V}_s$ 
generated 
by at most $n$
elements. Again, all algebras in ${\mathcal V}_s$
that are generated by at most $n$ elements will be free,
but there will be $(n+1)$-generated algebras in ${\mathcal V}_s$
that are not free.
\end{proof}

\bibliographystyle{plain}

\end{document}